\newtheorem{thm}{Theorem}[section]
\newtheorem{cor}[thm]{Corollary}
\newtheorem{pro}[thm]{Proposition}
\theoremstyle{definition}
\theoremstyle{remark}
\numberwithin{equation}{section}
\begin{document}
\title[Stochastic energy exchange model]{From billiards to
  thermodynamic laws: Stochastic energy exchange model }
\author[Yao Li]{Yao Li}
\address{Yao Li: Department of Mathematics and Statistics, 
University of Massachusetts Amherst, Amherst, MA, 01002, USA}
\email{yaoli@math.umass.edu}

\author{Lingchen Bu}
\address{Lingchen Bu: Department of Mathematics and Statistics, 
University of Massachusetts Amherst, Amherst, MA, 01002, USA}
\email{lingchen.l.bu@gmail.com}

\begin{abstract}
  This paper studies a billiards-like microscopic heat conduction
  model, which describes the dynamics of gas molecules in a long tube
  with thermalized boundary. We numerically investigate the law of
  energy exchange between adjacent cells. A stochastic
  energy exchange model that preserves these properties is
  then derived. We further numerically justified that the stochastic energy
  exchange model preserves the ergodicity and the thermal conductivity
  of the original billiard model. 
\end{abstract}

\maketitle

{\bf
The derivation of macroscopic thermodynamic laws from microscopic
Hamiltonian dynamics is a century-old challenge dating back to
Boltzmann. In this paper, we use a billiards-like
Hamiltonian model to study the microscopic heat conduction of gas
molecules in a long tube. After a series of numerical simulation, a mathematically tractable
stochastic energy exchange model is derived. We further show that many key properties of
the original deterministic problem are preserved by this stochastic
energy exchange model. This result opens the door to rigorous
justifications of many interesting problems in nonequilibrium statistical
mechanics. In our forthcoming papers, we will study the ergodicity, 
mesoscopic limit, and macroscopic thermodynamic laws of the stochastic energy
exchange model derived in this paper.

}
\section{Introduction}
in a nonequilibrium setting, it is very difficult
to prove that deterministic interactions among gas molecules or
crystal structures leads to macroscopic thermodynamic laws such as Fourier's law \cite{bonetto2000fourier}. From a dynamical systems
point of view, deterministic interactions of gas molecules can be
modeled as elastic collisions of particles. However, studying many-particle
billiard systems is a very difficult, with only limited known result
\cite{bunimovich2013hard, simanyi2003proof, simanyi1999hard}. It is
almost impossible to provide any mathematical derivation of
macroscopic thermodynamic laws by working on a many-particle billiard
model. Most known results that connect dynamical billiards and
thermodynamics are for one particle model, noninteracting particles, and weakly interacting
particles \cite{dolgopyat2016nonequilibrium, boldrighini1983boltzmann,
leonel2016thermodynamics, cook2012random, chumley2013billiards}.

On the other hand, there are also many stochastic microscopic heat
conduction models which assume some randomness among particle
interactions or energy transports. Stochastic interacting particle models are known to be
more tractable. There have been numerous known results about 
nonequilibrium steady-state, entropy production rate, fluctuation
theorem, thermal conductivity, and Fourier's law for various
stochastic models \cite{eckmann2006nonequilibrium, grigo2012mixing,
  kipnis1982heat, liverani2011toward, derrida1998exactly,
  eckmann1999entropy, rey2001exponential, rey2002fluctuations}. Therefore, it is tempting to reduce a deterministic
heat conduction model to a stochastic one. One example is
\cite{2014LiYoungNonlinearity}, in which a particle-disk collision model is well-approximated
by a stochastic particle system. 

This paper serves as the first paper of a sequel. In this sequel, we
will investigate how thermodynamic laws are derived from
billiards-like deterministic dynamics. Different from many pioneering
work, the billiard system in our study consists of a large number of
strongly interacting particles, which makes a rigorous study extremely
difficult. The goal of this paper is to numerically
justify that the deterministic dynamics in a billiards-like dynamical
system is well-approximated by a Markovian energy exchange
process. We carry out a series of numerical simulations to determine the rule of
stochastic energy exchanges. Then we will use computer-assisted method
proposed in \cite{li2017numerical} to justify that the resultant
stochastic energy exchange model preserves many key properties of the
original deterministic dynamics, such as ergodicity and thermal
conductivity.

Consider many gas particles in a long and thin tube as in Figure
\ref{fig1} (top). Assume further
that two ends of the tube is connected to heat baths with different
temperatures. For the sake of simplicity, we assume gas particles only
do free motion and elastic collisions. When a particle hits the left
(or right) boundary, a random particle drawn from a Boltzmann
distribution is chosen to collide with this particle. Besides that,
everything else is purely deterministic. Needless to say, any analysis
of such a strongly interacting multi-body problem is extremely
difficult. On the other hand, usually
a gas particle collides with other particles very frequently. At
ambient pressure, the mean free path of a gas molecule is as short as
$68$ nm \cite{jennings1988mean}. Therefore, motivated by earlier studies
\cite{bunimovich1992ergodic, gaspard2008heat, gaspard2008heat2, gaspard2008derivation}, we propose to
simplify the model by localizing gas molecules in a chain of
cells as in Figure \ref{fig1} (bottom). Disk-shaped particles are assumed to be trapped in those 2D cells, each of which is a chaotic billiard table.  Adjacent cells are connected by a ``gate''. Particles can
not pass the ``gate'' but can collide through it. This gives the {\it
  nonequilibrium billiard model} introduced in Section 2. Throughout
this paper, we assume that each cell contains $M$ particles. $M$ is a
fixed finite number.

\begin{figure}[h]
\centerline{\includegraphics[width = \textwidth]{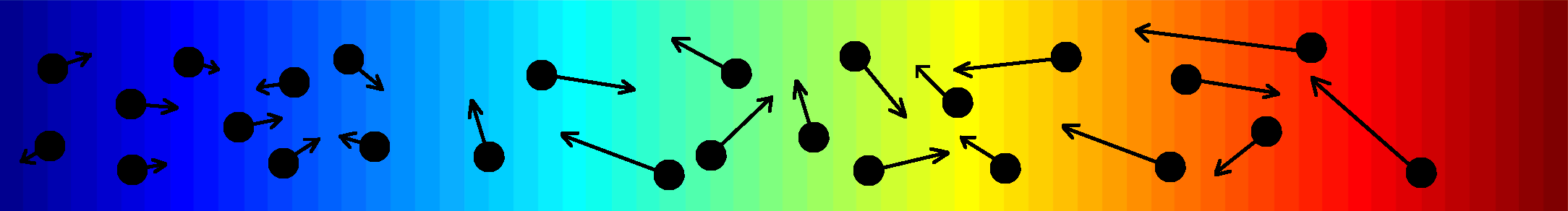}}
\centerline{\includegraphics[width = \textwidth]{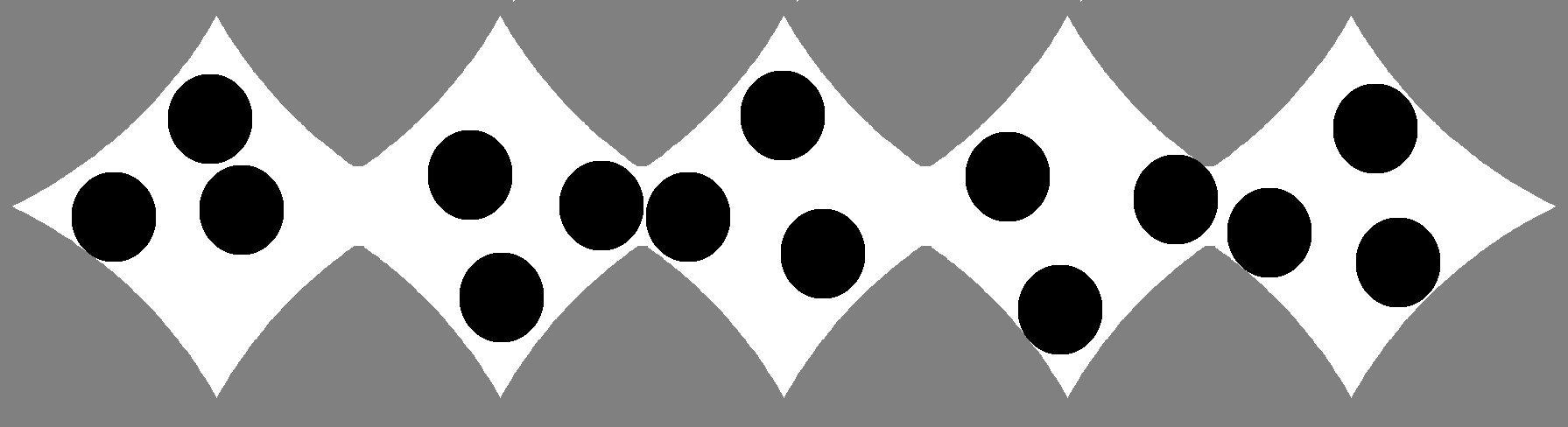}}
\caption{Top: Gas molecules in a tube. Bottom: Particles are trapped
  in a chain of cells. Particles can collide through the ``gate''.}
\label{fig1}
\end{figure}

Since the motion of gas particles is highly chaotic, a particle has
quick loss of memory. In fact, it is known that a chaotic billiard
system usually have good statistical properties
\cite{bunimovich1981statistical, bunimovich1991statistical,
  chernov2000decay, chernov2005billiards}. That is why we believe a 
Markov model should well approximate such a billiard model. In order
to make the billiard model mathematics tractable, instead of modeling each particle, we
choose to look for a Markov process that describes the time evolution
of the total energy stored in each cell. When particles in neighbor cells
collide, we let the corresponding cells exchange a certain random
amount of energy. This Markov process is called the {\it stochastic energy
  exchange model}. Due to the significant difficulty
of studying a multibody billiard system, a rigorous derivation of this Markov process is not
possible. Instead, we use numerical simulation to justify this model
reduction. 

One needs to answer two questions in order to find the rule of
stochastic energy exchange model: {\it when} and {\it how} two
particles in neighbor cells exchange energy. To answer these
questions, a series of numerical studies are carried out in Section
3. We find that the time distribution to the next collision between particles from
adjacent cells is not visually distinguishable from an exponential
 distribution. Therefore, one exponential clock should be associated
with each adjacent pair of cells. The rate of this exponential clock is
approximately square root of the minimum of two energies at the tail,
as a low total energy in a cell is always associated with a long time interval between
two energy exchanges. The rule of energy exchange is more
complicated. But after some numerical simulations, we find a simple rule
that can preserve qualitative properties that we are interested
in. The energy of the particle that participates the collision
satisfies a Beta distribution. After the collision, the energy is
uniformly redistributed. Although this is not exactly a precise rule
of energy distribution at a collision, it is simple enough while preserves the tail distribution
on the low energy side, which determines asymptotic properties
of the system. Although the entire reduction to a Markov process is
not rigorous, we attempt to provide as much mathematical justification
as possible based on several assumptions that heuristically should be
valid for sufficiently chaotic interacting particle system. 

The stochastic energy exchange model is then summarized in the end of
Section 3. Then we compared those two models from the aspects of
ergodicity and thermal conductivity. In Section 4, we numerically
show that both models has polynomial ergodicity $\sim t^{-2M}$,
where $M$ is the number of particles in each cell. Due to the
significant difficult of direct simulations, for the billiard
model, we use Monte Carlo simulation to compute the first passage time
to a ``high-energy state'', same as done in \cite{2015ChaosLi}. For
the stochastic energy exchange model, we adopt a computer assisted method
proposed in \cite{li2017numerical}. Some key estimations regarding
return times are obtained numerically, while other ingredients are rigorous. We
show that the stochastic energy exchange process admits a unique
nonequilibrium steady state. The speed of convergence to this steady
state, and the speed of correlation decay, are both polynomial. Finally, in Section 5, we compute
the thermal conductivity of the stochastic energy exchange model. We
find that the thermal conductivity is proportional to $1/N$, which is
consistent with early study of the billiard model in
\cite{gaspard2008heat, gaspard2008heat2}
(with one particle in each cell).

\section{Nonequilibrium billiard model for microscopic heat
  conduction}
\label{billiards}
As discussed in the introduction, it is difficult to study the
dynamics when a large number of gas molecules moving and interacting in a
tube. Since the mean free path of a gas
molecule is very short, we ``localize'' gas molecules into a chain of
cells to simplify the dynamics. The precise description of this locally confined particle
system is as follows. 

Consider an 1D chain of $N$ connected billiard tables in
$\mathbb{R}^{2}$, denoted by $\Omega_{1}, \cdots, \Omega_{N}$. Each table is
a subset of $\mathbb{R}^{2}$ whose boundary is formed by finitely many piecewise
$C^{3}$ curves. Neighboring billiard tables are connected by one or
finitely many ``bottleneck'' openings. The first and the last tables are
connected to the heat bath. The interaction with the heat bath will be
described later. 

Let $M$ be a positive integer that is fixed throughout this
section. Assume inside each billiard table there are $M$ rigid moving
disks with mass $2$ and radius $r$. Each disk-shaped particle moves freely until
it hits the boundary of the billiard table, or other particles. The
configuration of a state of particles in the $n$-th cell is denoted by
$(\mathbf{x}^{n}_{1}, \mathbf{v}^{n}_{1}, \cdots, \mathbf{x}^{n}_{M},
\mathbf{v}^{n}_{M})$, where $\mathbf{x}^{n}_{k}\in\mathbb{R}^2$ and
$\mathbf{v}^{n}_{k}\in \mathbb{R}^2$ are position (of the center)
and velocity of the $k$-th
particle in the $n$-th cell respectively. We assume the following for
this billiard system.

\begin{itemize}
  \item A particle is trapped in the cell in a way that its trajectory
    will never leave $\Omega_{n}$.
\item Particles in neighbor cells can collide with each other without
  passing through the opening between cells.
\item All collisions are elastic. Particles do not
  rotate. 
\item The billiard system is chaotic.
\item Let $\Gamma_{n} \subset \Omega_{n}$ be the collection of
  possible positions of particles in the $n$-th table. There exist
  positions $\mathbf{x}^{n}_{1}, \cdots, \mathbf{x}^{n}_{M}$ and
  $\epsilon > 0$ such that
$$
  | \mathbf{x}^{n}_{i} - \mathbf{x}^{n}_{j} | > 2r + \epsilon \mbox{
    for all } i, j = 1 \sim M, i \neq j 
$$
and
$$
  \bigcup_{i = 1}^{M}B(\mathbf{x}^{n}_{i}, R + \epsilon) \subset
  \Gamma_{n}\setminus \bigcup_{|m - n| = 1}B(\Gamma_{m}, r) \,.
$$
In other words particles in a table can be
completely out of reach by their neighbors. In addition a cell is sufficiently large such that particles won't get stuck.
\end{itemize}

Now we couple this chain with two heat baths. The temperature of two
heat baths are $T_{L}$ and $T_{R}$ respectively. We assume that the heat
bath is a billiard table with the same geometry and the same number of
moving particles. But the total
energy in the heat bath is randomly chosen. The
rule of the heat bath interaction is the following. At the beginning a
random total energy $E_{L}$ (resp. $E_{R}$) is chosen for the left
(resp. right) heat bath from the exponential distribution with
mean $T_{L}$ (resp. $T_{R}$). The initial distribution of particle
positions and velocities satisfies the conditional Liouville measure
(conditioning with the total energy $E_{L}$). This system
is evolved deterministically until the first collision between a heat bath particle and
a ``regular'' particle in the leftmost (resp. rightmost)
table. Immediately after such a collision, particles in the heat bath
are independently redistributed with a new total energy and new initial
positions/velocities, which are drawn from the same distribution. This is an 
idealized way to approximate the interaction with a heat bath that has
infinitely many particles. 

Let
$$
  \mathbf{\Omega} = \left \{  ( \mathbf{x}^{1}_{1}, \mathbf{v}^{1}_{1}, \cdots, \mathbf{x}^{1}_{M},
\mathbf{v}^{1}_{M}), \cdots, (\mathbf{x}^{N}_{1}, \mathbf{v}^{N}_{1}, \cdots, \mathbf{x}^{N}_{M},
\mathbf{v}^{N}_{M})  \right\} \subset\mathbb{R}^{4MN}
$$
be the state space of this billiard model. Let $\Phi_{t}
:\mathbf{\Omega} \rightarrow \mathbf{\Omega}$ be the flow generated by
the billiard model. It is easy to see that $\Phi_{t}$ is a piecewise
deterministic Markov process.

\section{Reduction to stochastic energy exchange model}
In order to make the nonequilibrium billiard model
introduced in Section \ref{billiards} tractable for further rigorous
studies, we need to consider the evolution of some coarse-grained variables
instead of velocities and positions of all particles. As introduced in
the introduction, we look for a Markov process that describes the time evolution
of total energy stored in each cell. 

The aim of this section is to provide numerical and mathematical justifications of such reduction
from the deterministic billiard model to a stochastic energy exchange
model. We remark that this section is {\it
  not} intended to be mathematically rigorous. In fact, any rigorous study of
a billiard system with more than one moving particle is extremely
difficult, with fairly limited known results \cite{kramli1991k,
  simanyi1992k, simanyi1999hard, simanyi2003proof}. Therefore, mathematical 
justifications in this section have to be built on various heuristic
assumptions. We provide as much mathematical justifications as
possible for each argument we raise. The conclusion is then verified by
carefully designed numerical simulations. 

In the following two subsections, we study {\it when} and {\it
  how} an energy exchange between two adjacent cells, i.e., a
collision between two particles from each cell respectively, should
happen. The setting of our numerical simulation is as follows. The
boundary of two cells is determined by $6$ circles and $2$ line
segments as seen in Figure \ref{fig:TwoCells}. In each cell, there
are $M$ particles undergoing free motion and elastic collisions. We use Monte Carlo
simulation to study the distribution of collision times and the
distribution of energy transferred during a collision. 

Cells in Figure \ref{fig:TwoCells} are designed such that all cell
boundaries are either flat or convex inwards, which makes motion of 
all particles chaotic \cite{chernov2006chaotic}. Recall that our main requirement of
the cell geometry is that it should generate a chaotic billiard
system. We expect our numerical result to be valid for any nonequilibrium billiard
model that satisfies our assumptions.  

\begin{figure}[h]
	\centering
	\includegraphics[width=0.7\linewidth]{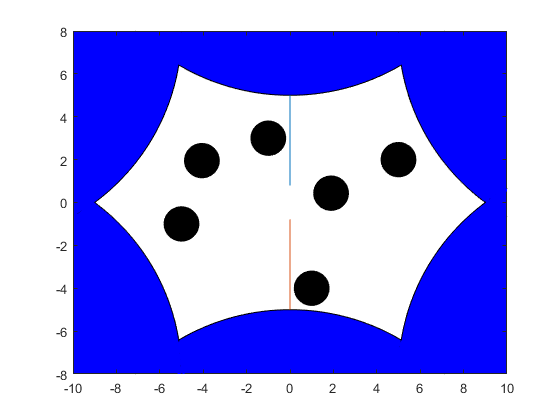}
	\caption{Setting of our numerical simulations. Two adjacent
          cells with three particles in each cell. Particles can
          collide through the gate, but can't pass through the gate. }
	\label{fig:TwoCells}
\end{figure}

\subsection{Distribution of collision time. } The first numerical result is
about the time distribution of energy exchanges between particles from
neighboring cells, called the {\it collision time}. The deterministic billiard model is
highly chaotic, which heuristically indicates a quick decay of correlation. This is
the main motivation for us to look for its Markovian
approximation. Due to the quick correlation decay, we expect collision times
between particles from adjacent cells to be close to an
inhomogeneous Poisson process. This is to say, when starting from a
fixed energy configuration, the first collision time should be
well-approximated by an exponential distribution. Further,
conditioning on the same energy configuration, the time duration between
two consecutive collisions should also satisfy an exponential
distribution with the same rate. In dynamical systems, these two
distributions are called the {\it hitting time} and the {\it return
  time} respectively. It is known that for a strongly mixing dynamical system,
those two times to an asymptotically small set coincides
\cite{haydn2005hitting, haydn2009compound}. We provide
the following simulations to study distributions of the hitting time
and the return time of the billiard model.

{\bf Rate of hitting time.}  Define the random variable
\begin{equation}
  \label{eq3-1}
\tau_c = \inf\{t>0|\text{ a cell-cell collision occurs at time}\,t\} \,.
\end{equation}

If the collision time is Poisson distributed, we should have
\begin{equation}
  \label{eq3-2}
P(\tau_c>t)=\int_{t}^{+\infty} \lambda e^{-\lambda x} \, dx
= e^{-\lambda t} \,.
\end{equation}
In other words the rate of this Poisson distribution is $\lambda = -\frac{1}{t}\log P(\tau_c>t) $.
However, the collision time is not Poisson distributed because the energy process produced by the
billiard model is clearly not Markovian. Instead, we expect the
distribution of $\tau_{c}$ to have an exponential tail. The slope of
such tail, if exists, is called the stochastic energy exchange rate. More
precisely, we are interested in $\lim_{t\to\infty} -\frac{1}{t}\log
P(\tau_c>t)$.

Consider an energy configuration $(E_{1}, E_{2})$ that corresponds to
total cell energy $E_{1}$ and $E_{2}$ in the left cell and right cell
respectively. Let $\pi$ be the Liouville measure with respect to two
cells and their $2 M$ particles, which is an
invariant measure of the billiard system involving two neighboring cells. A function $R(E_{1}, E_{2})$ is said
to be a {\it stochastic
  energy exchange rate} if 
\begin{align}
\label{eq3-3}
R(E_1,E_2) 
& = \lim_{t\to\infty} -\frac{1}{t} 
\log P_{\pi}[\tau_c >t|
\sum_{i=1}^{M}|\mathbf{v}^{1}_i|^2=E_1, \sum_{i = 1}^{M}|\mathbf{v}^{2}_j|^2=E_2] \\
& = \lim_{t\to\infty} -\frac{1}{t} 
\log P_{\pi}[\tau_c >t|\sum_{i=1}^{M}|\mathbf{v}^{1}_i(0^+)|^2=E_1,
  \sum_{i = 1}^{M}|\mathbf{v}_i(0^+)|^2=E_2 \,, \\
& \mathbf{x}^{1}_i\in \text{int}(\Gamma_1), \mathbf{x}^{2}_j\in \text{int}(\Gamma_2),
 |\mathbf{x}^{1}_i(0^{+})-\mathbf{x}^{2}_j(0^{+})|=2r \mbox{ for some } 1
  \leq i, j \leq M]
\end{align}
is well-defined. The first limit gives the tail of the
first collision time distribution when starting from a conditioning Liouville
measure (conditioning with the energy configuration $(E_{1},
E_{2})$). The second limit gives the tail of return time distribution,
when starting from the configuration corresponding to an
energy exchange event. 

Note that obviously $R(\alpha E_{1}, \alpha E_{2}) = \alpha^{1/2}
R(E_{1}, E_{2})$, we only need to simulate the case of $E_{1} + E_{2}
= 1$. Without loss of generality we assume $E_{1} \leq E_{2}$. We use
Monte Carlo simulations to compute distributions of $\tau_{c}$ for $M =
2, 3, 4$. For each $M$, we use $26$ different initial energy configurations $E_{1} =
  0.5, 0.4, 0.3, 0.2, 0.1, 0.05, 0.02, 0.01, 5\times 10^{-3}, 2 \times
  10^{-3}, 1\times 10^{-3}, 5\times 10^{-4}, 2 \times
  10^{-4}, 1\times 10^{-4}, 5\times 10^{-5}, 2 \times
  10^{-5}, 1\times 10^{-5}, 5\times 10^{-6}, 2 \times
  10^{-6}, 1\times 10^{-6}, 5\times 10^{-7}, 2 \times
  10^{-7}, 1\times 10^{-7}, 5\times 10^{-8}, 2 \times
  10^{-8}, 1\times 10^{-8}$. The initial distribution is a conditional Liouville measure, at
which the initial particle positions are uniformly distributed, and
the initial particle velocities are uniformly distributed on a
sub-manifold of $\mathbb{S}^{4M}$ such that $|\mathbf{v}^{1}_i(0^+)|^2=E_1,
  \sum_{i = 1}^{M}|\mathbf{v}_i(0^+)|^2=E_2$. The energy exchange rate
  $R(E_{1}, E_{2})$ is obtained by calculating the slope of
  distributions of $\tau_{c}$ in log-linear plots. 

Figure \ref{sampletail} shows three
  sample distribution curves of $\tau_{c}$ starting from different
  energy configurations. The probability $\mathbb{P}[\tau_{c} > t]$
  forms a straight line in the log-linear plot until there are not
  enough samples with $\tau_{c} > t$. In fact, for all energy
  configurations we have tested, one can not visually
  distinguish the distribution of $\tau_{c}$ from that of a genuine
  exponential distribution. This numerically verifies our assumption
  that the distribution of $\tau_{c}$ always has an exponential tail.

 Then we present the result $R(E_{1}, E_{2})$ versus $E_{1}$ for $M =
 2, 3, 4$, which is plotted in Figure
  \ref{exrate}. This is consistent with our numerical
finding in \cite{2015ChaosLi}. We are more interested in the scaling of $R$ as $E_{1} \rightarrow
0$. From the slope in the log-log plot in Figure 
\ref{exrate}, we can see that $R(E_{1}, E_{2}) \sim \sqrt{\min \{
  E_{1}, E_{2}\}}$ when $\min \{
  E_{1}, E_{2}\} \ll 1$ in all three cases. 

\begin{figure}[htbp]
\centerline{\includegraphics[width = \linewidth]{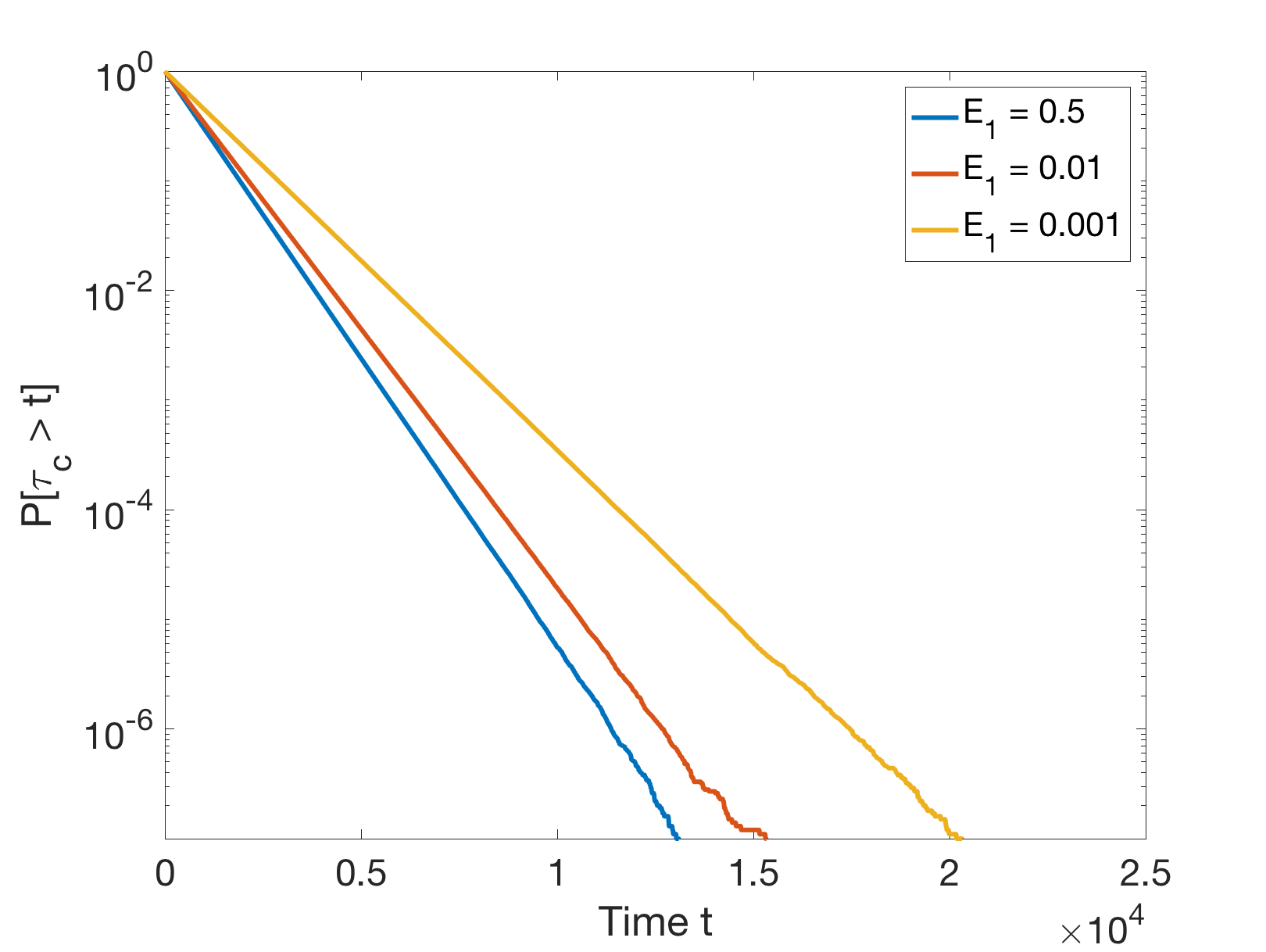}}
\caption{Three examples of distributions of $\tau_{c}$ with $E_{1} =
  0.5, 0.01$ and $0.001$. Each cell has $3$ moving particles. Each
  distribution is obtained by running $10^{8}$ independent
  trajectories until the first collision time. }
\label{sampletail}
\end{figure}

\begin{figure}[h]
\centerline{\includegraphics[width = \linewidth]{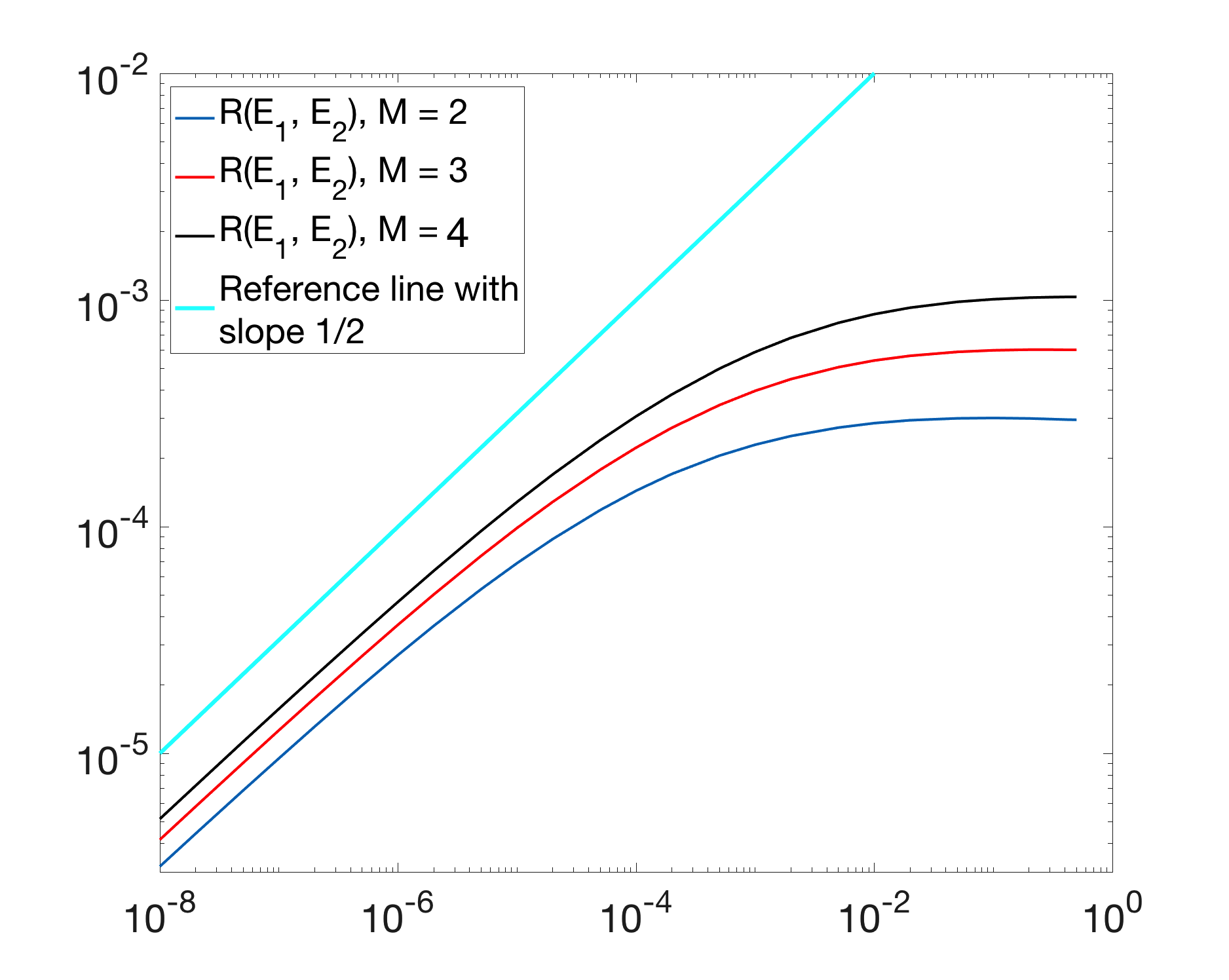}}
\caption{$R(E_{1}, E_{2})$ versus $E_{1}$ in log-log plot for $M = 2,
  3, 4$ with constraint $E_{1} + E_{2} = 1$. The initial distribution is a conditional Liouville measure
  conditioning on the energy configuration $(E_{1}, E_{2})$. For each
  $M$, $26$ different initial energy configurations $(E_{1}, E_{2})$
  with are chosen. The distribution of $\tau_{c}$ is obtained by
  running $10^{8}$ trajectories until the first collision
  time. For each energy configuration, $R(E_{1}, E_{2})$ is the slope of $\tau_{c}$ in the log-linear
  plot. }
\label{exrate}
\end{figure}

{\bf Hitting time vs. return time.} It remains to verify
that the second limit in \eqref{eq3-3} produces the
same tail. This is to say, we need to check that when conditioning on
the same energy configuration, the distribution of
time duration between two consecutive collision times, called the {\it
conditional return time}, has the same
exponential tail. Let $(E_1^{(1)}, E_2^{(1)}, t^{(1)}), (E_1^{(2)},
E_2^{(2)}, t^{(1)} + t^{(2)}), ..., (E_1^{(N)}, E_2^{(N)}, t^{(1)} +
\cdots + t^{(N)})$ be a long
trajectory sampled at collision times from a simulation starting from
$\mathbf{x}_0$. Same as in \cite{2015ChaosLi}, we expect to have a joint probability density
function $\rho_{\mathbf{x}_{0}}( E_{1}, E_{2}, t)$ about the
conditional return time and the energy configuration. Further, if
$R(E_{1}, E_{2})$ is well-defined, we should have
\begin{equation}
\label{eq3-4}
    \lim_{t\rightarrow \infty} \frac{1}{t} \log \left ( \int_{t}^{\infty}
  \rho_{\mathbf{x}_{0}}(E_{1}, E_{2}, s)|_{(E_{1}, E_{2})} \mathrm{d}s
\right )
  = R(E_{1}, E_{2}) \,.
\end{equation}

Assume $E_{1} + E_{2} = 1$. We define the following rescaled return time 
\begin{equation}
  \label{eq3-5}
\Lambda(t)=\int_{t}^{\infty} \rho_{x_0}(E,1-E,s)R(E,1-E)dEds \,.
\end{equation}
If the exponential tail of $\rho_{\mathbf{x}_{0}}(E_{1}, E_{2},
s)|_{(E_{1}, E_{2})}$ has the same slope $R(E_{1}, E_{2})$ in a
log-linear plot, $\Lambda(t)$
should have a tail $e^{-t}$. It is easy to see
that $\Lambda (t)$ can be sampled by $\{ t^{i}R(E_{1}^{(i)},
E_{2}^{(i)}) \}$, where $R(E_{1}, E_{2})$ is obtained from Figure
\ref{long}. The estimator of $\Lambda (t)$ is \begin{equation}
  \label{eq3-6}
  \hat{\Lambda}(t) = \frac{1}{N} \left | \{ t^{i} R(E^{(i)}_{1},
    E^{(i)}_{2}) > t \} \right | \,,
\end{equation}
where $N$ is the
sample size of the Monte Carlo simulation.

In Figure \ref{long}, we can see that $\hat{\Lambda} (t)$ matches
$e^{-t}$ very well for $M = 2, 3, 4$. This verifies that the conditional
return time coincides with the first collision time. 

\begin{figure}[h]
\centerline{\includegraphics[width = \linewidth]{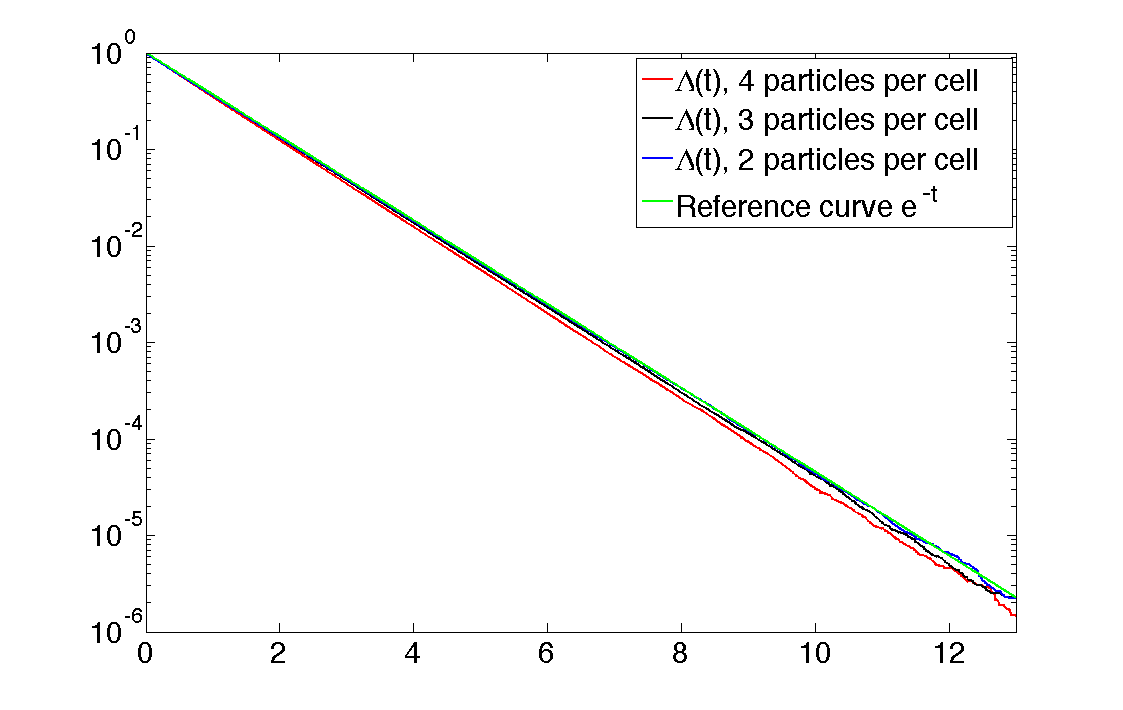}}
\caption{$\hat{\Lambda} (t)$ from long trajectories for $M = 2, 3,
  4$. $\hat{\Lambda}(t)$ is obtained from equation \eqref{eq3-6}. Sample size of each simulation is $N = 10^{8}$. }
\label{long} 
\end{figure}

\subsection{Rule of energy exchange}
The second study aims to reveal the rule of energy exchange at a
collision. We separate this problem into two parts:
\begin{itemize}
  \item[(a)] The energy distribution of a particle that participates
    in a collision. 
\item[(b)] The rule of the energy redistribution during the collision.
\end{itemize}

If there is only one particle in a cell, (a) becomes
trivial. Otherwise, consider $M$ particles collide with each other in
a chaotic billiard table. Due to the quick correlation decay, this
billiard system should converges to its invariant measure, i.e., the
Liouville measure, in a short time. At the Liouville measure, the
velocity distribution of these $M$ particles is a uniform distribution
on a $(2M - 1)$-sphere. Assume particle velocities are uniformly
distributed on a $(2M - 1)$-sphere, some easy calculation in Proposition \ref{beta}
shows that the energy distribution of a particle is a Beta distribution with
parameters $(1, M-1)$.

\begin{pro}
\label{beta}
Let $(X_{1}, \cdots, X_{2M})$ be a uniform distribution on the surface $\mathbb{S}^{2M-1}$. Then $X_{1}^{2} + X_{2}^{2}$ has Beta
distribution with parameters $(1, M-1)$.
\end{pro}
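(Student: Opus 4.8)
The plan is to realize the uniform distribution on $\mathbb{S}^{2M-1}$ through a Gaussian representation, which converts this geometric statement into a routine computation with Gamma distributions. Let $G_1, \dots, G_{2M}$ be independent standard normal random variables. Because the joint law of $(G_1,\dots,G_{2M})$ is rotationally invariant, the normalized vector $X_i = G_i / \sqrt{\sum_{j=1}^{2M} G_j^2}$ is uniformly distributed on $\mathbb{S}^{2M-1}$. Under this representation,
\[
X_1^2 + X_2^2 = \frac{G_1^2 + G_2^2}{\sum_{j=1}^{2M} G_j^2} = \frac{U}{U+V},
\]
where $U = G_1^2 + G_2^2$ and $V = \sum_{j=3}^{2M} G_j^2$ are independent, since they involve disjoint blocks of the $G_j$.

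Next I would identify the laws of $U$ and $V$. A sum of $k$ squared independent standard normals is a chi-squared variable with $k$ degrees of freedom, equivalently a Gamma variable of shape $k/2$. Hence $U$ is $\mathrm{Gamma}$ of shape $1$ (an exponential) and $V$ is $\mathrm{Gamma}$ of shape $M-1$, with a common scale parameter. The final step is the classical Gamma--Beta relation: if $U$ and $V$ are independent Gamma variables with shapes $\alpha$ and $\beta$ and a common scale, then $U/(U+V)$ has the $\mathrm{Beta}(\alpha,\beta)$ distribution and is independent of $U+V$. Applying this with $\alpha = 1$ and $\beta = M-1$ yields $X_1^2 + X_2^2 \sim \mathrm{Beta}(1, M-1)$, as claimed.

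The only points requiring care, rather than genuine difficulty, are bookkeeping ones: verifying that Gaussian normalization produces exactly the uniform law on the sphere (which follows from the rotational invariance of the isotropic Gaussian together with the independence of direction from radius), and tracking the factor of two in the degrees-of-freedom-to-shape conversion, so that the numerator block of dimension $2$ gives shape $1$ while the complementary block of dimension $2M-2$ gives shape $M-1$. I expect this factor-of-two accounting to be the main place a reader could go astray, so I would state it explicitly.

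Alternatively, one may avoid the Gamma--Beta lemma entirely and compute the density of $Y = X_1^2 + X_2^2$ directly by splitting $\mathbb{R}^{2M} = \mathbb{R}^2 \times \mathbb{R}^{2M-2}$, writing a sphere point as $(\sqrt{Y}\,\omega_1, \sqrt{1-Y}\,\omega_2)$ with $\omega_1 \in \mathbb{S}^1$ and $\omega_2 \in \mathbb{S}^{2M-3}$, and integrating out the angular coordinates. The resulting Jacobian produces a density proportional to $(1-Y)^{M-2}$ on $[0,1]$, which is exactly the $\mathrm{Beta}(1, M-1)$ density; I would present the Gaussian argument as the primary proof and mention this direct computation as a check.
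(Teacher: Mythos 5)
Your proposal is correct and follows essentially the same route as the paper's proof: represent the uniform point on $\mathbb{S}^{2M-1}$ by normalizing a standard Gaussian vector, write $X_1^2+X_2^2$ as a ratio $\Gamma_1/(\Gamma_1+\Gamma_2)$ of independent Gamma variables of shapes $1$ and $M-1$, and invoke the Gamma--Beta relation. The additional care you flag about the degrees-of-freedom-to-shape conversion, and the alternative direct angular-integration check, are sensible but not needed beyond what the paper already does.
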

\begin{proof}

Let $Y_{1}, \cdot, Y_{2M}$ be $2M$ standard normal random
variables. Let

\begin{equation}
\label{eq3-7}
  X_{i} = \frac{Y_{i}}{ \sqrt{Y_{1}^{2} + \cdots + Y_{2M}^{2}}} \,.
\end{equation}

Then it is well known that $(X_{1}, \cdots, X_{2M})$ gives a uniform
distribution on the surface of a unit
$(2M-1)$-sphere. Therefore, we have 
\begin{equation}
  \label{eq3-8}
 X_{1}^{2} + X_{2}^{2} = \frac{ Y_{1}^{2} + Y_{2}^{2}}{ \sum_{i =
      1}^{2M} Y_{i}^{2}} := \frac{\Gamma_{1}}{\Gamma_{1} + \Gamma_{2}
  } \,,
\end{equation}
where $\Gamma_{1}$ is a $\Gamma (1, 2)$ distribution and $\Gamma_{2}$
is a $\Gamma (M-1, 2)$ distribution. The ratio $\Gamma_{1}/(\Gamma_{1}
+ \Gamma_{2})$ is a Beta distribution with parameters $(1, M-1)$. 
\end{proof}

Since each cell in the nonequilibrium billiard model forms a chaotic
billiard table, it is reasonable to assume that at the collision time,
the $M$-particle system in a cell is close to its invariant
measure. Hence the energy distribution of any given particle is
approximated by a Beta distribution with parameters $(1, M-1)$.

Some corrections need to be added to the Beta distribution to
approximate the energy distribution of the particle that {\it
  participates in } a collision between particles from neighboring cells. The reason is that faster particles have higher chance to participate in such a collision. Hence
the distribution should be biased towards high energy states. This
bias can be estimated by the following heuristic arguments. 

The billiard system in each cell is assumed to be sufficiently
chaotic, which means the correlation decays quickly. Hence it is reasonable to assume that at the collision time, particle energies in neighboring
cells are independent. Consider a pair of consecutive cells with an
energy configuration $(E_{1}, E_{2})$. Let $x_{L}$ and
$x_{R}$ be the ratio of the energy of the colliding particle to the
total energy of its cell. Because of the independence assumption, the
probability density of $x_{L}$ should be
proportional to $A(t)$, where $t$ is the ``effective time'' that a particle 
from the right table is available for a collision, and $A(t)$ is the
area swiped by a particle during the time $(0, t)$. 

We only study the energy distribution of the left particle, as the
right one follows from an analogous argument. It is obvious that
\begin{equation}
  \label{eq3-9}
A(t) = \pi R^{2} + 2 R |v| t \,,
\end{equation}
where $|v| = \sqrt{E_{1} x_{L}}$. It is not easy to give an explicit
expression of the ``effective time'', but heuristically $t$ should be
proportional to
\begin{equation}
  \label{eq3-10}
\frac{R}{\sqrt{E_{2}}}\cdot \frac{\sqrt{E_{1}}}{\sqrt{E_{2}}} \,,
\end{equation}
where the first term approximates the time duration that a particle from the
right stays at the gate area, and the second term is the ratio of ``time
scales'' in two cells. Hence we have
\begin{equation}
  \label{eq3-11}
A(t) = \pi R^{2} + 2C R^{2} \frac{E_{1}}{E_{2}} \sqrt{x_{L}} \,,
\end{equation}
where $C$ is a constant that depends on the geometry of the
model. Combine with Proposition \ref{beta}, we conclude that the
probability density of $x_{L}$ should be approximated by 
\begin{equation}
\label{beta_approx}
  w(x_{L}) = \frac{1}{K}(1 + C\frac{E_{1}}{E_{2}}\sqrt{x_{L}})(1 - x_{L})^{M - 2}
\end{equation}
if $M \geq 2$, where $K$ is a normalizer.

This heuristic argument is verified by our numerical results. In Figure
\ref{participate},
 we compare the approximation \eqref{beta_approx} with
simulation results of $x_{L}$ for three energy configurations $(E_{1}, E_{2}) =
(0.1, 0.9)$, $(0.5, 0.5)$, and $(0.9, 0.1)$. The number of particles
on each side is $4$. The constant $C$ is chosen to be $2.5$. We can
see that the approximation in equation \eqref{beta_approx} is quite close to the simulation result,
especially when $x_{L}$ is close to $1$. Note that we are more
interested in the distribution of $x_{L}$ when it is close to $1$, as
it is related to the asymptotic dynamics of the full model. 
\begin{figure}[htbp]
\centerline{\includegraphics[width = \linewidth]{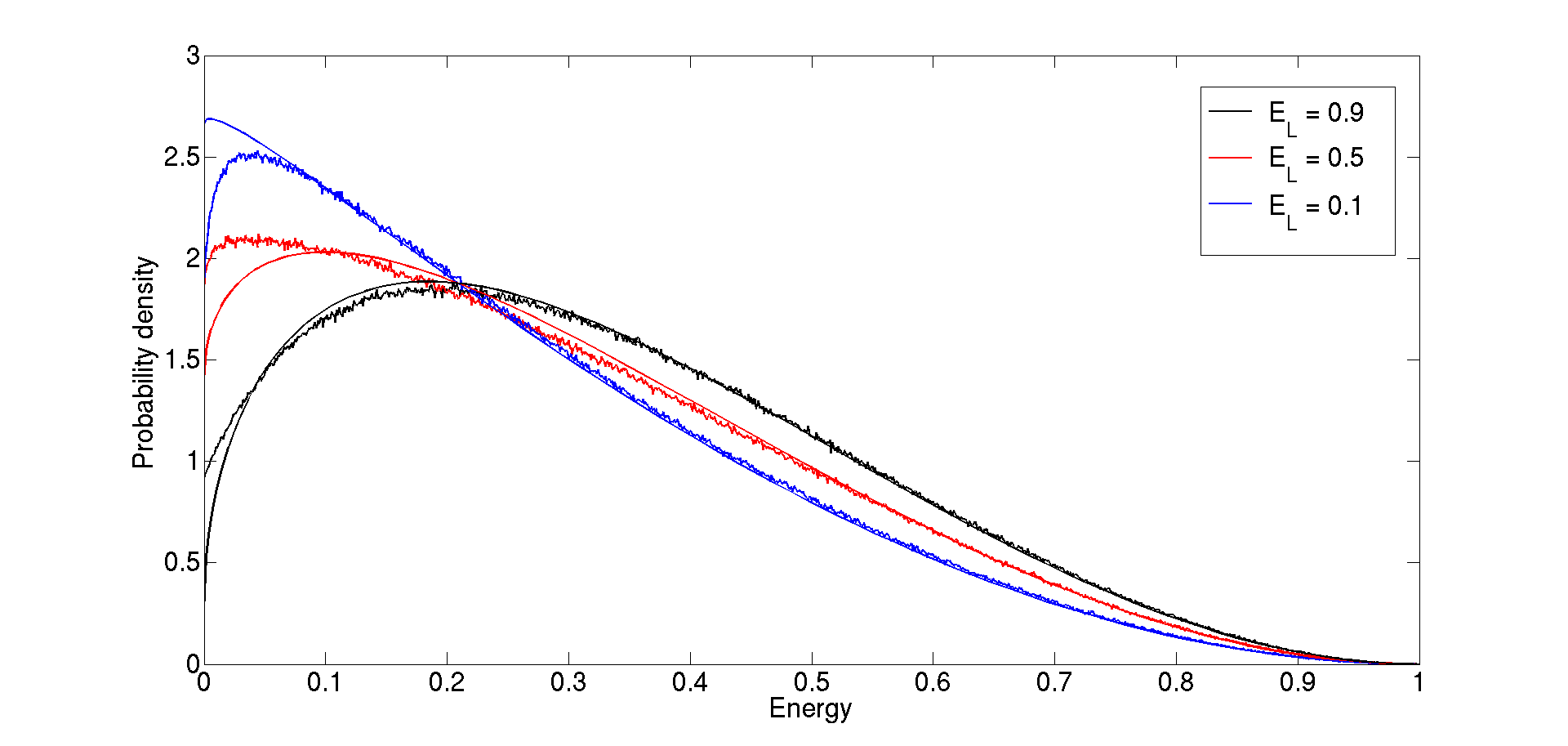}}
\caption{Energy distribution of particles participating in a collision.}
\label{participate} 
\end{figure}

Therefore, the energy of particles that participate in collision
should be $E_{1}B_{1}$ and $E_{2}B_{2}$, where $B_{1}$ has the
probability density function
\begin{equation}
  \label{eq3-12}
\frac{1}{K}(1 + C\frac{E_{i}}{E_{i+1}}\sqrt{x})(1 - x)^{M - 2} \,,
\end{equation}
and $B_{2}$ has the probability density function
\begin{equation}
  \label{eq3-13}
  \frac{1}{K'}(1 + C'\frac{E_{i+1}}{E_{i}}\sqrt{x})(1 - x)^{M - 2} \,,
\end{equation}
where $C, C'$ are constants and $K, K'$ are normalizers. 

To simplify the model, we only intend to capture the tail behavior of
random variables $B_{1}$ and $B_{2}$. In other words, we would like to simplify the rule of energy
exchange while preserving the right scaling when
$B_{1}$ ( or $B_{2}$) is close to $1$. The correction term in equation
\eqref{beta_approx} does not affect the tail. Hence we simplify
$B_{1}$ and $B_{2}$ to two independent random variables satisfying
Beta distributions with parameters $(1, M-1)$. This assumption is
adopted throughout the rest of this paper.  

\medskip

The answer to (b) eventually boils down to the following questions. Consider
two rigid disks moving and colliding in a ``sufficiently chaotic'' billiard table. If the
initial kinetic energies are $E_{1}$ and $E_{2}$ but the initial
position and direct of motion are both random, what will the energy distribution of each particle
be after the first collision? Without loss of generality, we assume the
velocities of two particles are
\begin{equation}
  \label{eq3-14}
\mathbf{v}_{1} = \sqrt{E_{1}}(\cos \alpha, \sin \alpha) \quad, \quad
  \mathbf{v}_{2} = \sqrt{E_{2}}(\cos \beta, \sin \beta ) \,,
\end{equation}
respectively. Let the center of mass of two particles at their first
collision be $\mathbf{x}_{1}$ and $\mathbf{x}_{2}$. Similarly we let
\begin{equation}
  \label{eq3-15}
\mathbf{x}_{1} - \mathbf{x}_{2} = 2R(\cos \gamma, \sin \gamma) \,.
\end{equation}

Assume two disks have equal mass and their mass center is the geometry
center. Then it is easy to see that the post-collision velocities
$\mathbf{v}'_{1}$ and $\mathbf{v}'_{2}$ are
\begin{equation}
  \label{eq3-16}
  \mathbf{v}_{1}' = \mathbf{v}_{1} - \frac{( \mathbf{v}_{1} -
    \mathbf{v}_{2} , \mathbf{x}_{1} - \mathbf{x}_{2}) }{ \|
    \mathbf{x}_{1} - \mathbf{x}_{2} \|^{2}}( \mathbf{x}_{1} -
  \mathbf{x}_{2} ) 
\end{equation}
and
\begin{equation}
  \label{eq3-17}
  \mathbf{v}_{2}' = \mathbf{v}_{2} - \frac{( \mathbf{v}_{2} -
    \mathbf{v}_{1} , \mathbf{x}_{2} - \mathbf{x}_{1}) }{ \|
    \mathbf{x}_{2} - \mathbf{x}_{1} \|^{2}}( \mathbf{x}_{2} -
  \mathbf{x}_{1} )  \,,
\end{equation}
respectively. Since the total energy is conservative, it is sufficient
to calculate $\| \mathbf{v}'_{1} \|^{2}$. Some calculation shows that
\begin{equation}
  \label{eq3-18}
  \| \mathbf{v}'_{1} \|^{2} = E_{1}\sin^{2}( \alpha - \gamma) +
  E_{2}\cos^{2}(\beta - \gamma) \,.
\end{equation}
Therefore, we only need to find the joint distribution of $\alpha - \gamma$
and $\beta - \gamma$. 

Let $\theta_{1} = \alpha - \gamma$ and $\theta_{2} = \beta -
\gamma$. Since a chaotic billiard system converges to its invariant
measure (Liouville measure) quickly, it is natural to assume that $\theta_{1}$ and $\theta_{2}$
are uniformly distributed on $\mathbb{S}^{1}$. Further we can assume
$\theta_{1}$ and $\theta_{2}$ to be independent at the collision time,
because two billiard systems evolve independently between two
collisions. Hence all we need is to find the conditional density of $(\theta_{1}, \theta_{2})$ when
a collision happens. Without loss of generality, we rotate the coordinate such that $\mathbf{x}_{1}
- \mathbf{x}_{2}$ is horizontal. Now assume two particles are right
before the first collision. It is easy to see that the time to
collision is proportional to $(\sqrt{E_{1}} \cos \theta_{1} -
\sqrt{E_{2}}\cos \theta_{2})^{-1}$ if $\sqrt{E_{1}} \cos \theta_{1} -
\sqrt{E_{2}}\cos \theta_{2} > 0$, and $\infty$ otherwise. In other
words, conditioning on having a collision, the approximate conditional
density of
($\theta_{1}, \theta_{2})$ should be proportional to
\begin{equation}
  \label{eq3-19}
 (\sqrt{E_{1}} \cos \theta_{1} - \sqrt{E_{2}}\cos \theta_{2} )
 \mathbf{1}_{ \{\sqrt{E_{1}} \cos \theta_{1} -
\sqrt{E_{2}}\cos \theta_{2}  > 0 \}} \,,
\end{equation}
where $\mathbf{1}_{A}$ is an indicator function with respect to set $A$. 

Therefore, the post-collision kinetic energy of particle $1$ equals 
$$
  E_{1}\sin^{2}( \theta_{1}) + E_{2}\cos^{2}(\theta_{2}) \,,
$$
where $(\theta_{1}, \theta_{2})$ has a joint probability density function
\begin{equation}
\label{eq3-20}
  \frac{1}{L}(\sqrt{E_{1}} \cos \theta_{1} - \sqrt{E_{2}}\cos \theta_{2} ) \mathbf{1}_{\{\sqrt{E_{1}} \cos \theta_{1} -
\sqrt{E_{2}}\cos \theta_{2}  > 0\}}  \,,
\end{equation}
and $L$ is a normalizer. 

This heuristic argument is justified by our numerical simulation result. In Figure
\ref{newenergy}, we demonstrate the probability density function of
the energy of the left
particle after a collision, when starting from conditional Liouville
measure conditioning on a fixed energy configuration. This matches exactly our analysis about
the post-collision energy distribution.

\begin{figure}[htbp]
\label{newenergy}
\centerline{\includegraphics[width = \linewidth]{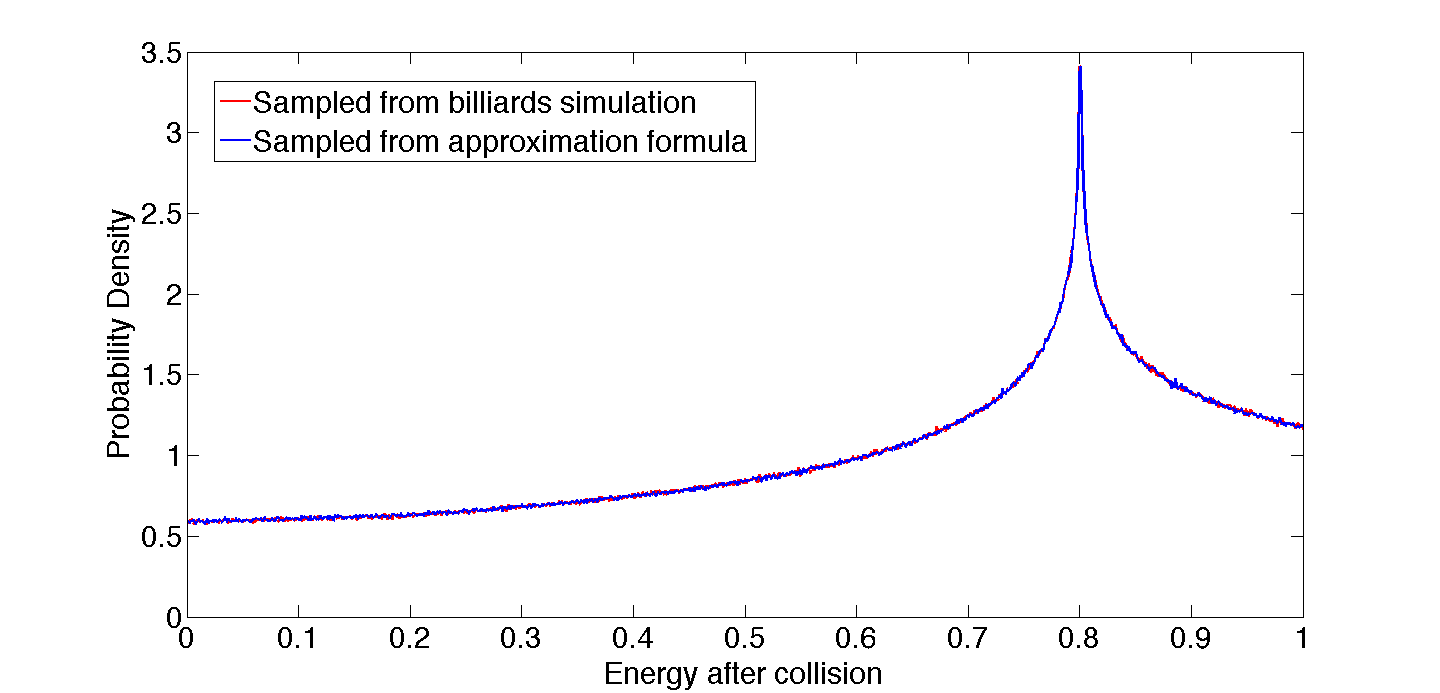}}
\caption{Energy distribution after a collision. Two particles with
  initial energy configuration $(0.8, 0.2)$ and conditional Liouville
  measure are simulated until their first collision. Red: Probability
  density function of post-collision energy of the left particle
  obtained from Monte Carlo simulation with $10^{7}$ samples. Blue: Probability
  density function of post-collision energy of the left particle given
  by equation \eqref{eq3-20}. }
\end{figure}

The joint density function in equation \eqref{eq3-20} is too
complicated to be interesting. However, it is not hard to see that for
each strictly positive energy pair $(E_{1}, E_{2})$,
the distribution of $\|\mathbf{v}'_{1}\|^{2}$ has strictly positive probability density
everywhere. Same as in (a), we look for a simple expression that
preserves the tail dynamics, which is essentially the tail probability that
$E_{1}$ (or $E_{2}$) is very small after an energy exchange. Therefore, for the sake of simplicity, we
assume that the
energy redistribution is given in a ``random halves'' fashion, i.e.,
\begin{equation}
  \label{eq3-21}
(E_{1}', E_{2}') = ( p( E_{1} + E_{2}), (1-p)(E_{1} + E_{2}) ) \,,
\end{equation}
where $p$ is uniformly distributed on $(0, 1)$. This assumption is
valid throughout the rest of this paper. 

Our numerical simulation shows that this simplification preserves the same tail
distribution as well as the same scaling of the energy
current. In Figure \ref{totalenergy}, we show the probability density
function of post-collision left cell energy when $3$ particles on each side
starting with an energy configuration $(E_{1}, E_{2}) = (0.5,
0.5)$. One can see a quadratic tail of the probability density function. This
means 
\begin{equation}
  \label{eq3-22}
\mathbb{P}[ E_{1}' < \epsilon] \propto \int_{0}^{\epsilon} s^{2}
\mathrm{d}s = O(\epsilon^{3}) \,.
\end{equation}
Finally, in Figure \ref{averageEflux} we plot the average energy flux when each side
has $3$ particles. The total energy is still set to be $1$. We can see that the energy flux is
proportional to the difference of cell energy. This further supports
the simplified rule of energy exchanges. 

\begin{figure}[htbp]
\centerline{\includegraphics[width = \linewidth]{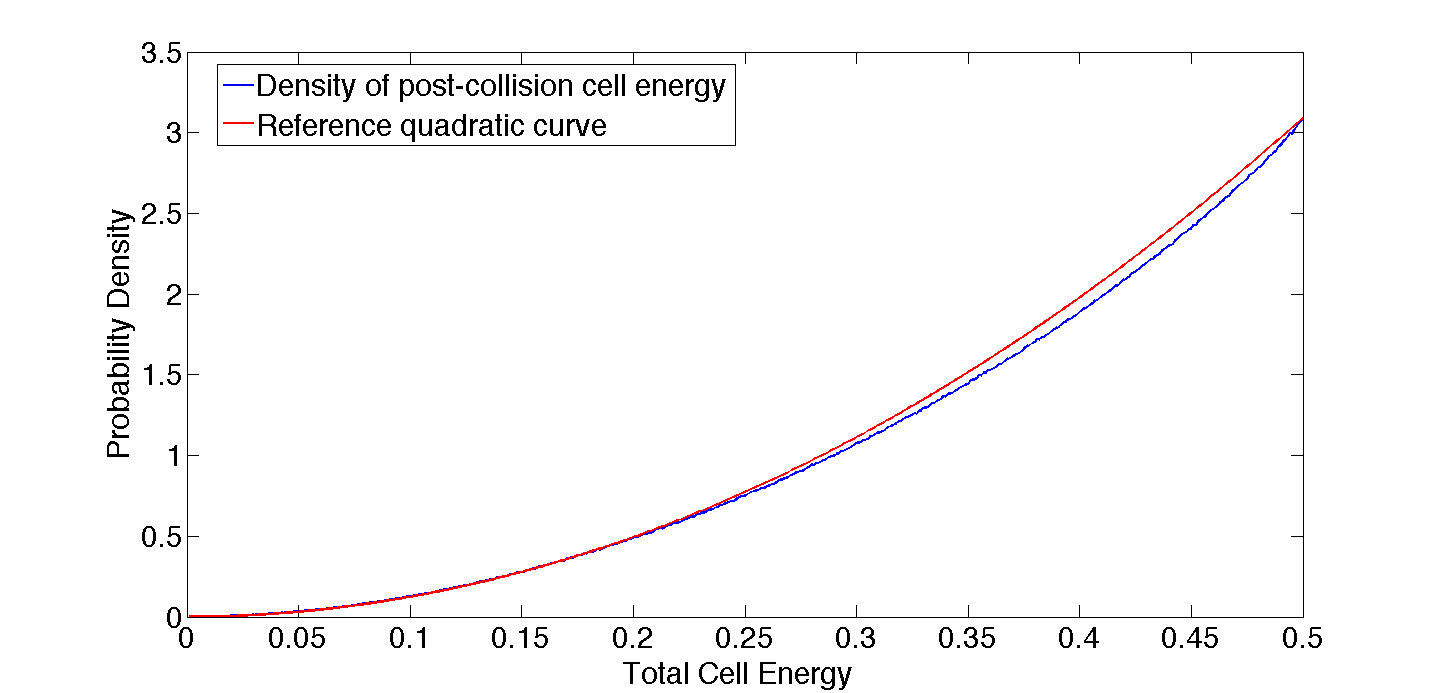}} 
\caption{Blue: Probability density function of total left cell energy after a
  collision. Number of particles on each side $= 3$. Red: A reference
  quadratic curve. The initial distribution is the conditional
  Liouville measure conditioning on $(E_{1}, E_{2}) = (0.5,
  0.5)$. The simple size of the simulation is $10^{8}$.  }
\label{totalenergy}
\end{figure}

\begin{figure}[htbp]
\centerline{\includegraphics[width = \linewidth]{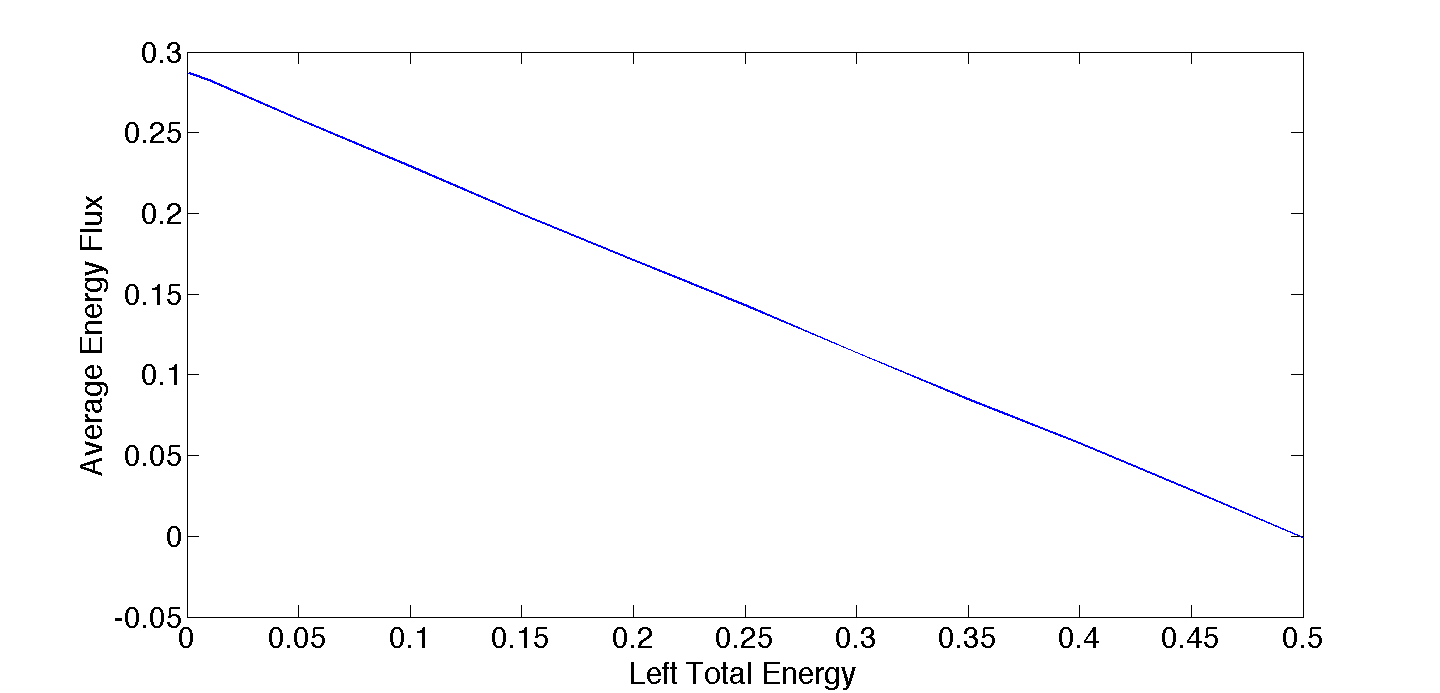}}
\caption{Average energy flux from right to left with varying energy
  configurations. Number of particles on each side $= 3$. $x$-axis is
  the total energy of the left cell $E_{1}$. Initial
  distributions are conditional Liouville measures conditioning on energy configurations $(0, 1),
  (0.05, 0.95), \cdots, (0.5, 0.5)$. Sample size for each energy
  configuration is $10^{6}$. }
\label{averageEflux}
\end{figure}

In summary, let $E_{i}$ and $E_{i+1}$ be the total local energy in two
neighboring cells, the rule of energy exchange is 

\begin{equation}
  \label{eq3-23}
(E_{i}', E_{i+1}') = ( E_{i} - (1-p)E_{i}B_{1} + pE_{i+1}B_{2},
  E_{i+1} - pE_{i+1}B_{2} + (1-p)E_{i}B_{1} ) \,,
\end{equation}
where $B_{1}$ and $B_{2}$ are two random variables with Beta distributions with parameters
$(1, M-1)$, and $p$ has uniform positive density on $(0, 1)$. Moreover, $B_{1}$,
$B_{2}$, and $p$ are independent.

\subsection{Stochastic energy exchange model}
In summary, the qualitative properties of the nonequilibrium billiard model is
preserved by the following {\it stochastic energy exchange model}. 

Consider a chain of $N$ sites that is connected to two heat
baths. Let $M$ be an integer that is the model parameter. Each site
carries a certain amount of energy. Temperatures of two heat baths are
assumed to be $T_{L}$ and $T_{R}$ respectively. As discussed in
Section 3.1, the energy exchange times can be approximated by a
Poisson distribution. Hence an exponential clock is associated to a pair of sites
$E_{i}$ and $E_{i+1}$. The rate of the clock is $R(E_{i}, E_{i+1}) =
\sqrt{\min\{ E_{i}, E_{i+1}\}}$. When the clock rings, a random proportion of
energy is chosen from each site. Then these energies are pooled
together and redistributed back randomly. The random proportion
satisfies a Beta distribution with parameters $(1, M-1)$. More precisely, the rule of
update immediately after a clock ring is as described in equation \eqref{eq3-23}.

The rule of interaction with the heat bath is analogous. Two more exponential clocks are associated to the left and the
right heat baths. The rate of the left (resp. right) clock is
$R(T_{L}, E_{1})$ (resp. $R(E_{N}, T_{R})$). When the clock ring, the
rule of update is
\begin{equation}
  \label{eq3-25}
E_{1}' = E_{1} - E_{1}B_{1} + p(E_{1}B_{1} + X_{L}B_{2}) \,,
\end{equation}
(resp. 
\begin{equation}
  \label{eq3-26}
 E_{N}' = E_{N} - E_{N}B_{1} + p(E_{N}B_{1} + X_{R}B_{2}) \,,
\end{equation}
)

where $X_{L}$ and $X_{R}$ are exponential random variables with mean
$T_{L}$ and $T_{R}$ respectively, $p$, $B_{1}$, $B_{2}$ are same as
before. 

The stochastic energy exchange model generates a Markov jump process $\mathbf{E}_{t}$
on $\mathbb{R}^{N}_{+}$. We denote $P^{t}$ by the transition kernel of
$\mathbf{E}_{t}$. We further define the left
operator of $P^{t}$ acting on a probability measure $\mu$
\begin{equation}
  \label{eq3-27}
\mu P^{t}(A) = \int_{\mathbb{R}^{N}_{+}} \mu(\mathrm{d}x) P^{t}(x, A)
\end{equation}
and the right operator of $P^{t}$ acting on a measurable function
$\xi$
\begin{equation}
  \label{eq3-28}
P^{t}\xi(x) = \int_{\mathbb{R}^{N}_{+}} P^{t}(x, \mathrm{d}y)\xi(y) \,.
\end{equation}

\section{Comparison of ergodicity of deterministic and stochastic models}
The reduction from billiard model to the stochastic energy model aims
to preserve the long time dynamics. In this section, we will use
numerical and analytical tools to verify that asymptotic dynamics are
preserved and that the two models have similar ergodicity. For
ergodicity, we mean the existence and uniqueness of nonequilibrium
steady state, the speed of
convergence to steady state, and the rate of correlation decay. Section
\ref{sec3-1} gives a computer assisted proof of ergodicity for the
stochastic model. Besides some numerical estimates regarding return times, all
arguments are rigorous. In comparison, proving the ergodicity of the nonequilibrium
billiard model is much more difficult. Instead, we provide
some numerical evidence together with heuristic arguments to justify
that the billiard model has the same rate of correlation decay.

\subsection{Probability preliminary on ergodicity of Markov processes}
\label{sec3-1}

A Markov process admits a unique ergodic invariant probability measure
under some drift conditions \cite{meyn2012markov, hairer2010convergence}. There are also existing results for
the speed of convergence to its invariant measure and the rate of
correlation decay \cite{meyn2012markov, hairer2010convergence}. However, these known results can not be applied to
the stochastic energy exchange model directly. Even proving the
simplest case ($M=1$) requires advanced techniques
 and very tedious calculations \cite{li2017polynomial}. It is very difficult to show the speed of convergence through a direct
Monte Carlo simulation either. The decay of correlation
has small expectation but $O(1)$ variance. To reduce the relative
error, a huge amount of samples will be necessary. If the speed of
convergence is slow, such a simulation becomes impractical. 

Instead, in this subsection we introduce a hybrid approach proposed
in \cite{li2017numerical}. This method circumvents main difficulties
of both analytical proof and direct Monte Carlo simulations. It gives
an easy and convincing justification of the ergodicity of a Markov
process on any measurable state space. Below we will focus on this
hybrid method for time continuous Markov processes.

Let $\Psi_{t}$ be a continuous time Markov process on a measure state
space $(X, \mathcal{B})$. Let $h > 0$ be a fixed constant. Denote the
time-$h$ sample chain of $\Psi_{t}$ by $\Psi^{h}_{n}$, i.e.,
$\Psi^{h}_{n} = \Psi_{nh}$. Let $\mathcal{P}(x, \cdot)$
be the transition kernel of $\Psi^{h}_{n}$. Further we define
$\tau_{A}(h) = \inf_{t \geq h}\{ \Psi_{t} \in A \}$.  

The theory of Markov processes on measurable state spaces is quite
different from that of Markov chains on countable spaces. We refer
\cite{meyn2012markov} for a detailed review of this subject. Below we
only introduce some necessary terminologies to use the hybrid method
in \cite{li2017numerical}.

Let $\phi$ be a measure on $(X, \mathcal{B})$. $\Psi^{h}_{n}$ is said
to be {\it $\phi$-irreducible} if for any $x
    \in X$ and any $A \in \mathcal{B}$ with $\phi(A) > 0$, there
    exists an integer $n > 0$ such that $\mathcal{P}^{n}(x, A) > 0$. 

A measurable set $\mathfrak{C} \subset X$ is said to be a {\it uniform
reference set} if 
\begin{equation}
  \label{eq4-1}
  P^{h}(x, \cdot) \geq \eta \theta(\cdot) \quad \mbox{ for all } x \in
  \mathfrak{C} \,,
\end{equation}
where $\theta( \cdot)$ is a nontrivial probability measure. 

The Markov chain $\Psi^{h}_{n}$ is said to be {\it strongly aperiodic}
if it admits a uniform reference set that satisfies $\theta
(\mathfrak{C}) > 0$. 

Finally, $\Psi_{t}$ is said to satisfy the ``continuity at zero''
condition if for any probability measure $\mu$, we have $
  \| \mu P^{\delta} - \mu \|_{TV} \rightarrow 0 \mbox{ as } \delta
  \rightarrow 0 $,  where $\| \cdot \|_{TV}$ is the total variation norm. 

By \cite{li2017numerical}, in order to show the polynomial ergodicity
of $\Psi_{t}$, we need the following four analytical conditions and two numerical conditions.
\begin{itemize}
  \item[{\bf (A1)}] $\Psi^{h}_{n}$ is irreducible with respect to a
    non-trivial measure $\phi$.
\item[{\bf (A2)}] $\Psi^{h}_{n}$ admits a uniform reference set
  $\mathfrak{C}$ and is strongly aperiodic. 
\item[{\bf (A3)}] $\Psi_{t}$ satisfies the ``continuous at zero''
  condition.
\item[{\bf (A4)}] There exists $\gamma > 0 $ such that
\begin{equation}
  \label{eq4-2}
\inf_{x \in \mathfrak{C}}  \inf_{t\in [0, h]}  P_{x}[ \Psi_{t} = \Psi_{0}] > \gamma \,.
\end{equation}
\item[{\bf (N1)}] Distributions $\mathbb{P}_{\mu}[ \tau_{\mathfrak{C}}(h)
  \geq t]$ and $\mathbb{P}_{\pi}[ \tau_{\mathfrak{C}}(h) \geq t]$ have
  polynomial tails $\sim t^{-\beta}$ for some $\beta > 1$, where $\pi$
  is the numerical invariant measure of $\Phi_{t}$.
\item[{\bf (N2)}] Function
\begin{equation}
  \label{eq4-3}
  \gamma(x) = \sup_{t \geq h} \frac{\mathbb{P}_{x}[
    \tau_{\mathfrak{C}}(h) > t]}{t^{-\beta}}
\end{equation}
is uniformly bounded on $\mathfrak{C}$.
\end{itemize}

In \cite{li2017numerical}, we have showed that conditions {\bf (A1) -- (A4)},
{\bf (N1)}, and {\bf (N2)} implies the following conclusions. 

\begin{itemize} 
  \item[(a)] $\Psi_{t}$ admits an invariant probability measure
    $\pi$.
\item[(b)] Polynomial convergence rate to $\pi$:
\begin{equation}
  \label{eq4-4}
\lim_{t\rightarrow \infty}  t^{\beta - \epsilon} \| \mu \mathcal{P}^{t} - \pi
\|_{TV}  = 0 
\end{equation}
for any $\epsilon > 0$.
\item[(c)] Polynomial decay rate of correlation:
\begin{equation}
  \label{eq4-5}
\lim_{t\rightarrow \infty}  t^{\beta - \epsilon} C_{\mu}^{\xi, \eta} = 0 
\end{equation}
for any $\epsilon > 0$ and probability measure $\mu$ satisfying {\bf
  (N1)}, where
\begin{equation}
  \label{eq4-6}
C^{\xi, \eta}_{\mu}(t) := |\int (\mathcal{P}^{t} \eta)( x) \xi(x)
  \mu(\mathrm{d}x) - \int (\mathcal{P}^{t} \eta)( x) \mu(\mathrm{d}x) \int
\xi(x) \mu( \mathrm{d}x) | \,.
\end{equation}
  \item[(d)] Polynomial convergence rate to $\pi$. For any $\epsilon >
    0$, we have
\begin{equation}
  \label{eq4-7}
\lim_{t\rightarrow \infty}  t^{\beta - \epsilon} \| \delta_{x} \mathcal{P}^{t} - \pi \|_{TV}  = 0
\end{equation}
for $\phi$-almost every $x \in X$. 
\item[(e)] Polynomial speed of contraction. For any $\epsilon >
    0$, we have
\begin{equation}
  \label{eq4-8}
\lim_{t\rightarrow \infty}  t^{\beta - \epsilon} \| \delta_{x}
  \mathcal{P}^{t} - \delta_{y} P^{t}\|_{TV}  = 0
\end{equation}
for $\phi$-almost every $x, y \in X$. 
\end{itemize}

Note that we did not specify conclusion (e) in \cite{li2017numerical}. But (e)
is a natural corollary of Proposition 4.1 of \cite{li2017numerical}, which
implies $\mathbf{E}_{x}[ \tau^{\beta}_{\mathfrak{C}}] < \infty$ for
$\phi$-almost $x \in X$.

\subsection{Verifying analytical conditions} We will first work on the
time-$h$ chain $\mathbf{E}_{n}$. The verification of condition {\bf
  (A1)} for $\mathbf{E}_{n}$ is based on the following Theorem.

\begin{thm}
\label{urs1}
For any set $K \subset \mathbb{R}^{N}_{+}$ of the form $K = \{(e_{1},
\cdots, e_{N} ) \,|\, 0 < c_{i} \leq e_{i} \leq C_{i} , i = 1 \sim N \}$ and any $h > 0$,
there exists a constant $\eta > 0$ such that
\begin{equation}
  \label{eq4-9}
P(\mathbf{E}, \cdot) > \eta U_{K}(\cdot) \,,
\end{equation}
for any $\mathbf{E} \in K$, where $U_{K}$ is the probability measure
for the uniform distribution on $K$.
\end{thm}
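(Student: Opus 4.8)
The plan is to realize the minorization \eqref{eq4-9} by exhibiting, for each pair $\mathbf{E},\mathbf{y}\in K$, a fixed finite sequence of clock rings that steers the process from $\mathbf{E}$ into any prescribed neighborhood of $\mathbf{y}$ within time $h$, and then to show that the time-$h$ law has an absolutely continuous component whose density $p_h(\mathbf{x},\mathbf{y})$ is continuous and strictly positive on $K\times K$. Once such a density is produced, compactness of $K\times K$ gives $c_0:=\inf_{K\times K}p_h>0$, so $P(\mathbf{E},A)\ge \int_{A\cap K}p_h(\mathbf{E},\mathbf{y})\,d\mathbf{y}\ge c_0\,|A\cap K| = c_0|K|\,U_K(A)$ for every $\mathbf{E}\in K$, which is \eqref{eq4-9} with $\eta=c_0|K|$, uniform in $\mathbf{E}$.

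The core is a reachability-with-positive-density statement. Since every coordinate in $K$ lies in $[\min_i c_i,\max_i C_i]$, I first arrange that along the trajectories built below all energies stay in a compact set $[a,b]^N$ with $0<a\le b<\infty$ depending only on $K,N,M,T_L,T_R$; hence every rate $R=\sqrt{\min\{\cdot,\cdot\}}$ and every bath rate $R(T_L,\cdot),R(\cdot,T_R)$ lies in a fixed compact positive interval, and the probability that a prescribed ordered sequence of $\mathcal{N}$ rings occurs in $[0,h]$ (with none afterward) is bounded below uniformly over the starting point. I construct the sequence in two stages: (i) a bounded number of heat-bath rings \eqref{eq3-25}--\eqref{eq3-26} that adjust the total energy from $\sum E_i$ to the target total $\sum y_i$ — possible because $X_L$ has strictly positive density on $(0,\infty)$, so energy can be injected, while taking the weight $p$ small in \eqref{eq3-25} dumps energy to the bath, so the (bounded) required change of total is achievable in either direction with positive density; and (ii) a left-to-right ``conveyor'' of pair rings \eqref{eq3-23} that deposits $y_i$ at site $i$ and carries the surplus rightward, the final surplus being absorbed by the right bath. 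The $N$ uniform weights $p$ of the \emph{last} ring touching each site are the control variables: since a later ring never revisits a finalized site, the Jacobian of (controls)$\mapsto(y_1,\dots,y_N)$ is triangular with diagonal entries equal to the strictly positive pooled energies $Q$, so it is nonsingular; freezing the remaining randomness (the Beta variables, the bath energies, the other $p$'s) in open ranges of positive density yields a push-forward density bounded below near $\mathbf{y}$.

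The main obstacle is the degeneracy of the Beta$(1,M-1)$ law of Proposition \ref{beta} near $1$ when $M\ge 3$: staying where that density is bounded below forces $B_1\le 1-\epsilon_0$, so a single pair ring releases at most a fraction $1-\epsilon_0$ of a site's energy and cannot drain a heavily loaded accumulator site down to a small target. I will handle this by draining each site with a \emph{bounded number} of consecutive rings of the same pair, each removing a fixed fraction of the excess; because the required drain ratios $y_i/(y_i+\text{surplus})$ are bounded below over the compact $K$, one fixed length $\mathcal{N}=\mathcal{N}(K,N,M)$ works for all $\mathbf{E},\mathbf{y}\in K$, which preserves uniformity. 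The second technical point is that the time-$h$ kernel has a genuinely \emph{continuous} absolutely continuous component: this follows by decomposing $P^h$ over the (a.s.\ finite) number of rings in $[0,h]$, noting that each per-ring kernel has a density that is jointly continuous on $[a,b]^N$ (a change of variables from $(p,B_1,B_2)$, resp.\ $(p,B_1,B_2,X_L)$, with nonvanishing Jacobian), and checking that the resulting finite convolution is continuous and, by the construction above, strictly positive on $K\times K$. Compactness then delivers the uniform constant $\eta$.
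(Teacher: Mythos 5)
Your proposal is correct in its essentials and rests on the same mechanism as the paper's own proof: steer the chain through an explicit finite sequence of clock rings inside $[0,h]$ whose probability is bounded below uniformly over $K$, and extract the absolutely continuous lower bound from the uniform redistribution variable $p$ of the last ring touching each site, whose ``Jacobian'' is the pooled energy and is bounded away from zero on $K$. The architectural differences are worth recording. The paper first \emph{drains} every site down to an energy in $[\epsilon/2,\epsilon]$ (events $A_1,\dots,A_N$, which only need probabilities bounded below, not densities), then injects a single lump of size in $(H,2H)$ from the right bath and distributes it right-to-left, each site retaining an amount in $[e_j^*,e_j^*+\mathrm{d}e_j]$ with probability at least $\mathrm{const}\cdot \mathrm{d}e_j$; multiplying gives $P(\mathbf{E}_0,B(\mathbf{E}^*,\mathrm{d}\mathbf{E}))\geq \sigma\,\mathrm{d}e_1\cdots \mathrm{d}e_N$ uniformly, which \emph{is} the minorization against $U_K$ with no compactness or continuity argument needed. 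Your version (fix the total at the bath, then convey left-to-right) is equally viable, but two of your auxiliary steps deserve comment. First, the Beta$(1,M-1)$ degeneracy near $1$ that you call the main obstacle is not actually one: in the step that finalizes site $i$ you only need the \emph{probability} of $\{B\geq 1-y_i/E_i\}$, which equals $(y_i/E_i)^{M-1}$ and is bounded below on $K$ because $y_i\geq c_i$ and the accumulated $E_i$ is bounded above; the paper's events $B_j$ rely on exactly this, so your multi-ring draining is a legitimate but unnecessary detour. Second, your claim that the time-$h$ kernel has a \emph{continuous} absolutely continuous component is the weakest link as written: a single pair ring conserves the pair's total energy, so the per-ring kernels are supported on hyperplanes and do not individually carry densities on $[a,b]^N$; only the fully composed sequence, including a bath ring, does. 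That step can be repaired, but the paper's route of bounding the probability of every small hypercube from below, uniformly in both endpoints, sidesteps it entirely and is the cleaner way to conclude.
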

\begin{proof}
This proof is similar to Theorem 5.1 of \cite{li2017numerical}. We include the
proof here for the completeness of the paper. Consider any point
$\mathbf{E}^{*}=\{e^{*}_1,\ldots, e^{*}_N\} \in K$ and any small
vector $\mathrm{d}\mathbf{E}=\{ ( \mathrm{d}e_{1}, \cdots, \mathrm{d}e_{N}), de_i >0,
i=1 \sim N\}$. Assume $0 < \mathrm{d}e_{i} \ll 1$ and let
\begin{equation}
  \label{eq4-10}
B(\mathbf{E}^{*},\mathrm{d}\mathbf{E}) = \{ (x_{1}, \cdots, x_{N}) \in \mathbb{R}^{N} \, | \, e^{*}_i \leq x_i \leq
e^{*}_i+\mathrm{d}e^{*}_i \}
\end{equation}
be a small hypercube close to  $\mathbf{E}^*$. It then suffices to prove
that for any $\mathbf{E}_{0} = \{ \bar{e}_{1}, \cdots, \bar{e}_{N}
\} \in K$, we have
\begin{equation}
\label{eqn:analytic}
P(\mathbf{E}_{0}, B( \mathbf{E}^*,d\mathbf{E})) > \sigma \mathrm{d}e_{1}\mathrm{d}e_{2}\cdots
\mathrm{d}e_{N} \,,
\end{equation}
where $\sigma$ is a strictly positive constant that only depends on
$K$.

We then construct the following sequence of events to go from the state
$\mathbf{E}_{0}$ to $B(\mathbf{E}^{*}, \mathrm{d}\mathbf{E})$ with desired positive
probability. Denote the process starting from $\mathbf{E}_{0}$ by
$\mathbf{E}_{t} = (e_{1}(t), \cdots, e_{N}(t))$. Let $\delta = \frac{h}{2N+1}$ and let $\epsilon>0$ be
sufficiently small such that $\epsilon < \min \{c_i,i=1 \sim N\}$. Let
$H=\sum_{i=1} ^N (e^{*}_i+\mathrm{d}e_i)$. We consider events $A_1 \cdots, A_N$
and $B_1, \ldots, B_{N+1}$, where $A_i$ and $B_j$ specifies what
happens on the time interval $(i\delta, (i+1)\delta]$ and
$( N\delta + (j-1)\delta, N \delta + j\delta]$, respectively.

\begin{itemize}
  \item $A_{i} =$ $\{e_i(i\delta) \in [\epsilon/2,\epsilon]\}$ and  \{
    The
    $i$-th clock rings exactly once, all other clocks are silent on
    $((i-1)\delta, i\delta]$ \}. 
    \item $B_{1} = $ {Energy emitted by right heat bath $\in (H, 2H)$
      } and { the
    $N$-th clock rings exactly once, all other clocks are silent on $(
    N\delta, (N+1) \delta]$ }.
\item $B_{j} = $ $\{e_j(N\delta + j \delta) \in [e^{*}_{N+2 - j},
  e^{*}_{N+2-j}+\mathrm{d}e_j]\}$ and $\{$ the
    $(N+1-j)$-th clock rings exactly once, all other clocks are silent on $(
    N\delta + (j-1) \delta, N\delta + j \delta]$ $\}$ for $j = 2, \cdots,
  N+1$. 
\end{itemize}

The idea is that the energy at each site is first transported to the right
heat bath, with only an amount of energy between $\epsilon/2$ and
$\epsilon$ left at each site (events $A_{1} \sim A_{N}$). Then a sufficiently large amount of energy is injected into
the chain from the right heat bath (event $B_{1}$) so that it is always possible for
site $j$ to acquire an amount of energy between $e^{*}_j$ and $e^{*}_j+\mathrm{d}e_j$
by passing the rest to site $j-1$ (events $B_{2} \sim B_{N+1}$), where sites $0$ and $N+1$ denote
the left and right heat baths respectively.  

It is easy to show that for each parameter $M$, the probability of occurrence of the sequence of
events described above is always strictly positive. Below is a sketch
of calculation. We leave detailed calculations to the reader.   
\begin{enumerate}
\item[(a)] After each energy exchange, the rate of clocks have a
  uniform lower bound $\epsilon/2$. 
\item[(b)] By the rule of energy
  redistribution, it is easy to see that the probabilities of $A_{i}$ are strictly positive.
\item[(c)] There is also a uniform upper bound on $H$ given by $2\sum_{i=1} ^N C_i$. 
\item[(d)] From the rule of energy redistribution, the probability that $e_j(N\delta + j \delta)
  \in (e^{*}_j, e^{*}_j+\mathrm{d}e_j)$ after an energy exchange in
  event $B_{j+1}$ is greater than $\alpha \mathrm{d}e_{j}$ for some strictly positive
  constant $\alpha$. Hence probabilities
  of $B_j$ are greater than $\mathrm{const} \cdot \mathrm{d}e_{j}$. 
\end{enumerate}
In addition, all these probabilities are uniformly bounded from below
for all $\mathbf{E}^{*}$ and $\mathbf{E}_{0}$ in $K$. Hence we have
\begin{equation}
  \label{eq4-11}
\mathbb{P}[ A_{1} \cdots A_{N}B_{1} \cdots B_{N+1}] \geq \sigma
  \mathrm{d}e_{1}\cdots \mathrm{d}e_{N}
\end{equation}
for some constant $\sigma > 0$.
\end{proof}

As a corollary, we can prove that $\mathbf{E}_{n}$ is both strongly aperiodic and irreducible 
with respect to the Lebesgue measure.

\begin{cor}
\label{aperiod}
$\mathbf{E}_{n}$ is a strongly aperiodic Markov chain.
\end{cor}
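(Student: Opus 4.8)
The plan is to read strong aperiodicity directly off Theorem \ref{urs1}. Recall from Section \ref{sec3-1} that $\mathbf{E}_{n}$ is strongly aperiodic as soon as it admits a uniform reference set $\mathfrak{C}$ --- that is, a set on which the minorization $P^{h}(\mathbf{E}, \cdot) \geq \eta\, \theta(\cdot)$ holds uniformly for some nontrivial probability measure $\theta$ --- subject to the self-consistency requirement $\theta(\mathfrak{C}) > 0$. Since all of the analytic content is already carried by Theorem \ref{urs1}, the only work left is to package that theorem into the form of a uniform reference set and to verify the last inequality.

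Concretely, I would fix any box
\[
K = \{ (e_{1}, \cdots, e_{N}) \,|\, 0 < c_{i} \leq e_{i} \leq C_{i}, \ i = 1 \sim N \}
\]
with $0 < c_{i} < C_{i} < \infty$, and take the same time step $h$ that defines the sampled chain $\mathbf{E}_{n}$. Theorem \ref{urs1} then supplies a constant $\eta > 0$ with $P(\mathbf{E}, \cdot) \geq \eta\, U_{K}(\cdot)$ for every $\mathbf{E} \in K$, where $U_{K}$ is the uniform probability measure on $K$ and $P$ is understood as the time-$h$ kernel. Setting $\mathfrak{C} = K$ and $\theta = U_{K}$, this is precisely the defining minorization of a uniform reference set: $U_{K}$ is a nontrivial probability measure (normalized Lebesgue measure on a box of positive finite volume), and the bound is uniform over all $\mathbf{E} \in \mathfrak{C}$.

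It then remains only to check $\theta(\mathfrak{C}) > 0$. Because $U_{K}$ is a probability measure supported on $K$, we have $\theta(\mathfrak{C}) = U_{K}(K) = 1 > 0$, so the requirement holds trivially. Hence $\mathfrak{C} = K$ is a uniform reference set with $\theta(\mathfrak{C}) > 0$, and by definition $\mathbf{E}_{n}$ is strongly aperiodic.

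I do not anticipate any genuine obstacle: the corollary is an unwinding of the definition of strong aperiodicity once Theorem \ref{urs1} is in hand. The only point requiring care is bookkeeping --- ensuring the $h$ in Theorem \ref{urs1} is the one used to form the sampled chain $\mathbf{E}_{n}$ --- but since Theorem \ref{urs1} is stated for every $h > 0$, this is automatic for whatever fixed $h$ one works with.
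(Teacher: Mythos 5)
Your proposal is correct and follows exactly the paper's own argument: take $\mathfrak{C} = K$ with $\theta = U_{K}$ from Theorem \ref{urs1}, note $U_{K}(K) = 1 > 0$, and conclude strong aperiodicity from the definition. The extra bookkeeping remark about the time step $h$ is sound but not needed, since Theorem \ref{urs1} holds for every $h > 0$.
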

\begin{proof}
By theorem \ref{urs1}, $K$ is a uniform reference set. In addition $U_{K}(K) > 0$. The strong aperiodicity follows from
its definition.
\end{proof}

Therefore $\mathbf{E}_{n}$ is strongly aperiodic. 

\begin{cor}
\label{irreducible}
$\mathbf{E}_{n}$ is $\lambda$-irreducible, where $\lambda$ is the
Lebesgue measure on $\mathbb{R}^{N}_{+}$. 
\end{cor}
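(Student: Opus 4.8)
The plan is to deduce $\lambda$-irreducibility from the minorization bound of Theorem \ref{urs1} together with a controllability argument that drives the chain from an arbitrary starting point into a box of the admissible form. Recall that, by definition, we must show that for every $x \in \mathbb{R}^N_+$ and every measurable $A$ with $\lambda(A) > 0$ there is an $n$ with $\mathcal{P}^n(x, A) > 0$, where $\mathcal{P} = P$ is the time-$h$ kernel of Theorem \ref{urs1}. First I would reduce to the case where the target set lives inside a box. Writing $\mathbb{R}^N_+ = \bigcup_k K_k$ with $K_k = [1/k, k]^N$, each $K_k$ is of the form admitted by Theorem \ref{urs1}; since $\lambda(A) > 0$ and $A = \bigcup_k (A \cap K_k)$, there is some $k$ with $\lambda(A \cap K) > 0$ for $K := K_k$. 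It then suffices to find $n$ with $\mathcal{P}^n(x, A \cap K) > 0$.

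The argument then proceeds in two steps joined by Chapman--Kolmogorov. In the first step I claim that $\mathcal{P}(x, K) > 0$ from any $x \in \mathbb{R}^N_+$. This is exactly the path constructed in the proof of Theorem \ref{urs1}: first drain each site down to an energy in $[\epsilon/2, \epsilon]$ by letting the clocks ring in the prescribed sequence (events $A_1, \dots, A_N$), then inject a large amount of energy from the right bath and refill the sites into a target box $B(\mathbf{E}^*, \mathrm{d}\mathbf{E}) \subseteq K$ (events $B_1, \dots, B_{N+1}$). The only place the hypothesis $\mathbf{E}_0 \in K$ entered that proof was to extract a lower bound $\sigma$ \emph{uniform} over starting points; for a single fixed $x$ with strictly positive, finite coordinates each event in the sequence still has strictly positive probability, so $\mathcal{P}(x, B(\mathbf{E}^*, \mathrm{d}\mathbf{E})) > 0$ and hence $\mathcal{P}(x, K) > 0$. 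This first step is the main obstacle: one must verify that the drain-and-refill construction yields a positive-probability transition from an \emph{arbitrary} initial configuration, including coordinates far outside $K$ (very large, or very close to $0$). Concretely this means checking that the draining events stay feasible from any positive energies, so that the clock rates remain bounded below once the energies are reduced and the prescribed rings and silences each have positive probability, and that the heat-bath injection is large enough to refill every site into $K$. Since none of these probabilities need be uniform in $x$ here, the check is a direct reading of the proof of Theorem \ref{urs1} rather than a new estimate.

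In the second step I would spread the mass from $K$ onto $A$ using Theorem \ref{urs1}, which gives $P(\mathbf{E}, \cdot) \ge \eta\, U_K(\cdot)$ for every $\mathbf{E} \in K$; hence for such $\mathbf{E}$ we have $P(\mathbf{E}, A) \ge \eta\, U_K(A \cap K) = \eta\, \lambda(A \cap K)/\lambda(K) > 0$. Note that this smearing step is genuinely needed, since $A \cap K$ may have positive measure yet empty interior and so need not contain any box reachable directly by the refill. Combining the two steps,
\begin{equation*}
\mathcal{P}^2(x, A) = \int_{\mathbb{R}^N_+} \mathcal{P}(x, \mathrm{d}\mathbf{E})\, P(\mathbf{E}, A) \ge \int_K \mathcal{P}(x, \mathrm{d}\mathbf{E})\, P(\mathbf{E}, A) \ge \eta\, \frac{\lambda(A \cap K)}{\lambda(K)}\, \mathcal{P}(x, K) > 0,
\end{equation*}
which establishes $\lambda$-irreducibility with $n = 2$.
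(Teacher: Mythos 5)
Your proposal is correct, but it takes a longer route than the paper. The paper's proof exploits the fact that the box $K$ in Theorem \ref{urs1} is \emph{arbitrary}: given $\mathbf{E}_{0}$ and $A$, one simply chooses a single box $K$ large enough (all $c_{i}$ small, all $C_{i}$ large) to both contain $\mathbf{E}_{0}$ and satisfy $U_{K}(K\cap A)>0$; Theorem \ref{urs1} then gives $P^{h}(\mathbf{E}_{0},A)\geq \eta\, U_{K}(A)>0$ directly, i.e.\ irreducibility already at $n=1$ with no separate controllability step. You instead fix $K$ from $A$ alone and insert a first step showing $\mathcal{P}(x,K)>0$ from an arbitrary $x\notin K$, which you justify by re-reading the drain-and-refill construction without the uniformity in the starting point; this is sound (and at the same level of rigor as the paper's own sketch), but it is exactly the work the paper's choice of $K$ avoids. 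Note also that your step 1 could itself be deduced from Theorem \ref{urs1} by enlarging the box to a $K'\supseteq K\cup\{x\}$, since then $\mathcal{P}(x,K)\geq \eta\, U_{K'}(K)>0$ --- and once one sees this, the two-step argument collapses to the paper's one-step argument. What your decomposition buys is modularity: it separates ``reach a fixed good set'' from ``smear mass within it,'' and would survive in settings where the minorization is only available on one fixed reference set rather than on every box; your observation that $A\cap K$ may have empty interior, so that the uniform minorization (and not merely reachability of boxes) is genuinely needed in the final step, is a correct and worthwhile remark that the paper leaves implicit.
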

\begin{proof}
Let $A \subset \mathbb{R}^{N}_{+}$ be a set with strictly positive
Lebesgue measure. Then there exists a set $K$ that has the form $\{(e_{1},
\cdots, e_{N}) \,|\, 0 < c_{i} \leq e_{i} \leq C_{i} , i = 1 \sim N
\}$ and $U_K(K \cap A) > 0$. 

For any $\mathbf{E}_{0} \in \mathbb{R}^{N}_{+}$ and the time
step $h > 0$, we can choose a $K \subset \mathbb{R}^{N}_{+}$ of the form $K = \{(e_{1},
\cdots, e_{N} ) \,|\, 0 < c_{i} \leq e_{i} \leq C_{i} , i = 1 \sim N
\}$ for some $c_{i} > 0$ and $C_{i} < \infty$, such that
$\mathbf{E}_{0} \in K$. Same construction as in Theorem \ref{urs1} implies that $P^{h}(\mathbf{E}_{0},
\cdot) > \eta U_{K}( \cdot)$ for some $\eta > 0$. Therefore, $P^{h}(
\mathbf{E}_{0}, A) > \eta U_{K}(A)  > 0$. 

\end{proof}

Hence assumption {\bf (A1)} and {\bf (A2)} are satisfied.

We can also prove the absolute continuity of $\pi$ with
respect to the Lebesgue measure, which is denoted by $\lambda$. 

\begin{pro}
\label{abscont}
If $\pi$ is an invariant measure of $\mathbf{E}_{t}$, then $\pi$ is
absolutely continuous with respect to $\lambda$ with a strictly positive density. 
\end{pro}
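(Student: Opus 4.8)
The plan is to establish two claims: that $\pi\ll\lambda$, and that the density $f=\mathrm{d}\pi/\mathrm{d}\lambda$ is strictly positive $\lambda$-almost everywhere. The strict positivity will fall out quickly from Theorem \ref{urs1}, so the real content is absolute continuity. The observation that makes this tractable is that the \emph{entire dynamics preserves absolute continuity}: although a single internal energy exchange from a fixed point lands on the lower-dimensional set $\{e_i'+e_{i+1}'=e_i+e_{i+1}\}$ with the other coordinates frozen, this degeneracy disappears once the starting distribution is already spread out.

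The first and central step is to prove the key lemma: if a probability measure $\nu\ll\lambda$ then $\nu P^h\ll\lambda$, where $P^h$ is the time-$h$ kernel of $\mathbf{E}_t$. The point is that each elementary update map, for almost every value of its auxiliary randomness $(p,B_1,B_2,X_L,X_R)$, is a diffeomorphism of $\mathbb{R}^N_+$, since it modifies only one or two coordinates by an invertible affine map and is the identity on the rest. For an internal bond the $(e_i,e_{i+1})$-block of \eqref{eq3-23} is linear with matrix $\left(\begin{smallmatrix} 1-(1-p)B_1 & pB_2 \\ (1-p)B_1 & 1-pB_2\end{smallmatrix}\right)$, whose determinant $1-(1-p)B_1-pB_2$ vanishes only on a $\lambda$-null set of $(p,B_1,B_2)$; for a heat-bath bond the scalar slope in \eqref{eq3-25}--\eqref{eq3-26} is $1-(1-p)B_1\in(0,1)$, hence nonzero. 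The no-jump part of $P^h$ is multiplication by $e^{-\Lambda(\mathbf{E})h}$ (the identity weighted by the survival probability, with $\Lambda$ the total clock rate), and every multi-jump contribution is a composition of such maps integrated against continuous densities for the jump times and auxiliary variables. Thus $\nu P^h$ is a mixture of pushforwards of $\nu$ under a.e.-diffeomorphisms, hence absolutely continuous. I expect the determinant bookkeeping, the bound ensuring positivity of the image, and the ``mixture of absolutely continuous measures is absolutely continuous'' assembly to be the main technical obstacle, although each ingredient is routine.

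With the lemma in hand I would take the Lebesgue decomposition $\pi=\pi_{ac}+\pi_s$ and apply invariance $\pi=\pi P^h=\pi_{ac}P^h+\pi_s P^h$. Since $\pi_{ac}P^h\ll\lambda$ by the lemma, uniqueness of the Lebesgue decomposition forces $\pi_s=(\pi_s P^h)_s$; because a probability kernel preserves total mass, $\|\pi_s\|=\|(\pi_s P^h)_s\|=\|\pi_s P^h\|$ then gives $\|(\pi_s P^h)_{ac}\|=0$, i.e.\ $\pi_s P^h$ is purely singular. On the other hand, Theorem \ref{urs1} provides for every box $K$ and every $\mathbf{E}\in K$ the absolutely continuous minorization $P^h(\mathbf{E},\cdot)\ge\eta_K U_K(\cdot)$, so $\pi_s P^h\ge\eta_K\,\pi_s(K)\,U_K$ and hence $(\pi_s P^h)_{ac}\ge\eta_K\,\pi_s(K)\,U_K$. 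Combined with $\|(\pi_s P^h)_{ac}\|=0$ this yields $\pi_s(K)=0$ for every box $K$, and letting $K\uparrow\mathbb{R}^N_+$ gives $\pi_s=0$, that is $\pi\ll\lambda$.

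Finally, for strict positivity I would write $f=\mathrm{d}\pi/\mathrm{d}\lambda$ and argue by contradiction: if $\lambda(\{f=0\})>0$ then there is a box $K$ with $\lambda(\{f=0\}\cap K)>0$ and $\pi(K)>0$, and invariance together with Theorem \ref{urs1} gives $\pi(\{f=0\})=\int\pi(\mathrm{d}\mathbf{E})\,P^h(\mathbf{E},\{f=0\})\ge\pi(K)\,\eta_K\,U_K(\{f=0\})>0$, contradicting $\pi(\{f=0\})=0$. Hence $f>0$ $\lambda$-almost everywhere, which completes the proof.
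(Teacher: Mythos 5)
Your argument is correct and is essentially the argument the paper relies on: the paper gives no proof of its own here, deferring to Lemma~6.3 of \cite{2014LiYoungNonlinearity}, which runs along the same lines (the kernel maps absolutely continuous measures to absolutely continuous measures since each jump map is a.e.\ an invertible affine map; the Lebesgue decomposition plus the minorization of Theorem~\ref{urs1} then kills the singular part and forces positivity of the density). Your write-up is a valid self-contained substitute; the only detail worth adding is that the boxes $K$ exhaust only the open orthant, so one should note that the dynamics a.s.\ keeps every $E_i'>0$ (indeed $E_i'\ge E_i(1-(1-p)B_1)>0$), which rules out singular mass on the boundary.
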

\begin{proof}
This proof is identical to that of Lemma 6.3 of
\cite{2014LiYoungNonlinearity}. 
\end{proof}

Condition {\bf (A3)}, or ``continuity at zero'' follows from the
following Proposition.
\begin{pro}
\label{cont0}
For any probability measure $\mu$ on $\mathbb{R}^{N}_{+}$, $\lim_{\delta \rightarrow 0} \| \mu P^{\delta } - \mu \|_{TV} = 0$.
\end{pro}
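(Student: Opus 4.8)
The plan is to exploit the fact that $\mathbf{E}_{t}$ is a pure jump process: between successive clock rings the energy configuration is frozen, so starting from a configuration $\mathbf{E} = (E_{1}, \ldots, E_{N})$ the waiting time to the first jump is exponential with rate
$$
\Lambda(\mathbf{E}) = R(T_{L}, E_{1}) + \sum_{i=1}^{N-1} R(E_{i}, E_{i+1}) + R(E_{N}, T_{R}) \,,
$$
the sum of the rates of all $N+1$ independent exponential clocks. Since $R(E_{i}, E_{i+1}) = \sqrt{\min\{E_{i}, E_{i+1}\}}$ is finite for every finite energy pair, $\Lambda(\mathbf{E}) < \infty$ for every $\mathbf{E} \in \mathbb{R}^{N}_{+}$. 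The first step is to decompose the time-$\delta$ kernel according to whether a jump has occurred:
$$
P^{\delta}(\mathbf{E}, \cdot) = e^{-\Lambda(\mathbf{E}) \delta} \delta_{\mathbf{E}}(\cdot) + \left(1 - e^{-\Lambda(\mathbf{E}) \delta}\right) \nu_{\delta}(\mathbf{E}, \cdot) \,,
$$
where $\nu_{\delta}(\mathbf{E}, \cdot)$ is the conditional distribution of $\mathbf{E}_{\delta}$ given that at least one clock has rung on $[0, \delta]$.

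Next I would extract a pointwise total-variation bound. From the decomposition above,
$$
P^{\delta}(\mathbf{E}, \cdot) - \delta_{\mathbf{E}}(\cdot) = \left(1 - e^{-\Lambda(\mathbf{E}) \delta}\right)\left(\nu_{\delta}(\mathbf{E}, \cdot) - \delta_{\mathbf{E}}(\cdot)\right) \,,
$$
so that, since $\nu_{\delta}$ and $\delta_{\mathbf{E}}$ are probability measures, $\| P^{\delta}(\mathbf{E}, \cdot) - \delta_{\mathbf{E}} \|_{TV} \leq 2(1 - e^{-\Lambda(\mathbf{E}) \delta})$. Using convexity of the total-variation norm under the averaging $\mu P^{\delta} = \int \mu(\mathrm{d}\mathbf{E}) P^{\delta}(\mathbf{E}, \cdot)$, this yields
$$
\| \mu P^{\delta} - \mu \|_{TV} \leq \int_{\mathbb{R}^{N}_{+}} \| P^{\delta}(\mathbf{E}, \cdot) - \delta_{\mathbf{E}} \|_{TV}\, \mu(\mathrm{d}\mathbf{E}) \leq 2 \int_{\mathbb{R}^{N}_{+}} \left(1 - e^{-\Lambda(\mathbf{E}) \delta}\right) \mu(\mathrm{d}\mathbf{E}) \,.
$$

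Finally I would let $\delta \to 0$ inside the integral. For each fixed $\mathbf{E}$ the integrand $1 - e^{-\Lambda(\mathbf{E})\delta}$ decreases to $0$ as $\delta \to 0$, and it is bounded above by the constant $1$, which is $\mu$-integrable because $\mu$ is a probability measure. The dominated convergence theorem then gives $\int (1 - e^{-\Lambda(\mathbf{E})\delta})\, \mu(\mathrm{d}\mathbf{E}) \to 0$, and hence $\|\mu P^{\delta} - \mu\|_{TV} \to 0$, as claimed. The only point requiring any care is that $\Lambda(\mathbf{E})$ is unbounded as the energies grow, so no estimate on $1 - e^{-\Lambda(\mathbf{E})\delta}$ that is uniform in $\mathbf{E}$ is available; the dominated convergence argument, with the constant dominating function $1$, is precisely the device that circumvents this and is the crux of the proof.
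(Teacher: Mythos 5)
Your proof is correct, and it is essentially the standard argument that the paper outsources by citation: the paper's ``proof'' of Proposition \ref{cont0} is a one-line reference to Lemma 5.6 of \cite{li2013existence}, which rests on the same two ingredients you use, namely that the total jump rate $\Lambda(\mathbf{E})$ is finite at every state (so the no-jump probability $e^{-\Lambda(\mathbf{E})\delta}$ tends to $1$ pointwise) together with an integration over $\mu$ that tolerates the unboundedness of $\Lambda$. Your explicit dominated-convergence step, with the constant $1$ as dominating function, is exactly the right way to handle the lack of a uniform-in-$\mathbf{E}$ bound, so nothing is missing.
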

\begin{proof}
This proof is identical to that of Lemma 5.6 of \cite{li2013existence}.
\end{proof}

Condition {\bf (A4)} is trivial as all clock rates are uniformly
bounded in any compact set $\mathfrak{C}$.

\subsection{Verifying numerical conditions}

Now we are ready to present our numerical results. The demonstrated
results are for $N = 3$ and $M = 2$, while our conclusion holds for
other parameters we have tested. The uniform
reference set $\mathfrak{C}$ is chosen as 
\begin{equation}
  \label{eq4-12}
  \mathfrak{C} = \{ (e_{1}, \cdots, e_{N}) \,|\, 0.1 \leq e_{i} \leq
  100 , i = 1 \sim N\} \,.
\end{equation}

Throughout our numerical justification, we let $h = 0.1$. (Recall that
for a time-continuous Markov process $\Psi_{t}$, the definition of
$\tau_{\mathfrak{C}} = \tau_{\mathfrak{C}}(h)$ depends on $h$.) Our
numerical simulation shows that the tail of $\mathbb{P}_{\mathbf{E}}[
\tau_{\mathfrak{C}} > t]$ is $\sim t^{-4}$ for many initial condition
$\mathbf{E}$ that we have tested. This is consistent with the
heuristic argument. The tail of $\mathbb{P}_{\pi}[\tau_{\mathfrak{C}}
> t ]$ is a very subtle issue as an explicit formulation of $\pi$ is
not possible. We conjecture that $\mathbb{P}_{\pi}[\tau_{\mathfrak{C}}
> t ] \sim t^{-3}$.

We have the following argument and numerical evidence to support this conjecture. Consider the simplest case when $N = 1$. If $\pi( \{ E_{1}
< \epsilon \})$ have the tail $\epsilon^{p}$ for all sufficiently small
$\epsilon$, then the probability density function at $E_{1} = \epsilon$ is $\sim \epsilon^{p-1}$. Since $\pi$ is invariant, for an infinitesimal $h >
0$, we have
 \begin{equation}
  \label{eq4-13}
O(h)\epsilon^{M}  \approx O(h)\int_{0}^{\epsilon} s^{p-1} \sqrt{s} \mathrm{d}s \,,
\end{equation}
where the left term is the probability that $E_{1} < \epsilon$ after
one energy exchange within $(0, h)$, and the right term is the probability that
$E_{1}$ exchanges energy with $(0, h)$. This implies $p = M - 1/2$.

One needs to be very careful about the initial distribution when
computing the numerical invariant probability measure, as it takes a long time for the model to
converge to the steady state. As shown below, the slow convergence
mainly occurs at low energy sets. Our strategy is to generate a
numerical invariant probability measure from a initial distribution
with a correct tail. Let $\mu_{0} \sim (\rho_{1} , \cdots, \rho_{N} )$, where $\rho_{i}$ is an exponential
distribution with mean $(T_{L } + T_{R})/2$. We manually correct the
tail of $\mu_{0}$ before putting it into the Monte Carlo
simulation. This manual correction gives a new initial distribution $\mu_{1} \sim (\rho_{1} , \cdots, \rho_{N} )$, where
\begin{equation}
\label{eq4-14}
  \rho_{i} \sim \left \{ 
\begin{array}{ll}
 \mathcal{E}( (T_{L } + T_{R})/2 )& \mbox{ if }  \mathcal{E}( (T_{L } +
                                                        T_{R})/2 ) >
                                                        0.01 \\
0.01 u^{(M - 1/2)^{-1}} & \mbox{otherwise}
\end{array}
\right . \,,
\end{equation}
and $\mathcal{E}(\lambda)$ means an exponential random variable with mean
$\lambda$. 

We use the following simulation to justify this correction. The expectation of $E_{2}$ versus time is plotted in Figure
\ref{sstest}, which is stabilized quickly. In fact, expectations of
most observables we have tested converge very fast. However, the slow
convergence phenomenon can be captured at the tail, as seen in Figure
\ref{comtail}. The tail of $\mu_{0}P^{200}$ and
$\mu_{1}P^{200}$ are compared in Figure \ref{comtail}, in which we
find that the low energy tail of $\mu_{0}$ has not been 
stabilized yet. This problem is solved by using $\mu_{1}$. This
prompts us to choose $\hat{\pi} = \mu_{1}P^{100}$ as the numerical invariant measure. Our simulation shows that $\mathbb{P}_{\hat{\pi}}[\tau_{\mathfrak{C}} > t] \sim t^{-3}$. 
(See Figure \ref{SEE_ss}. )

\begin{figure}[h]
\centerline{\includegraphics[width = \linewidth]{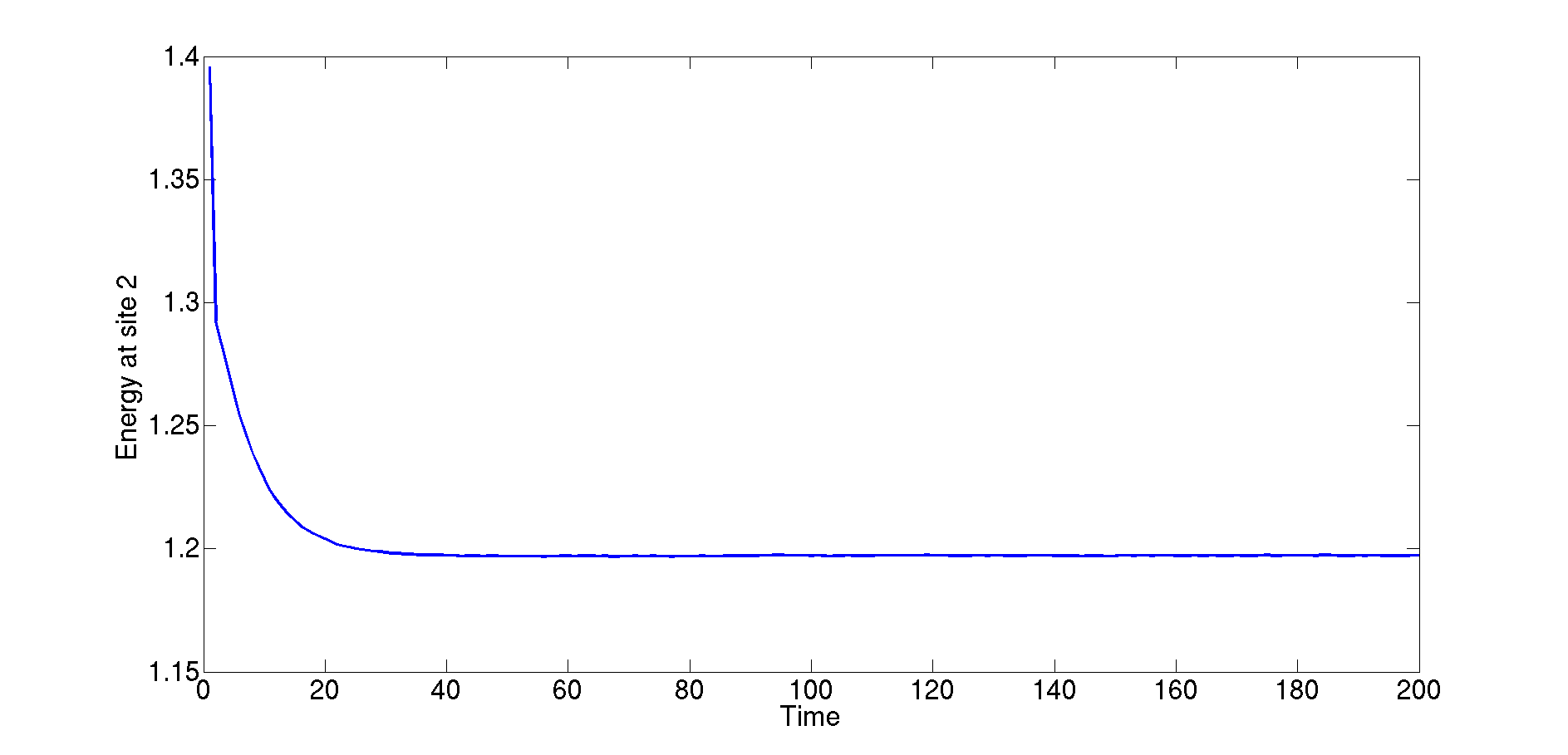}}
\caption{Expectation of $E_{2}$ vs. time in the stochastic energy
  exchange model. Model parameters are $T_{L} = 1, T_{R} = 2$, $N = 3$, and $M =
  2$. Sample size of Monte Carlo simulation is $10^{9}$.} 
\label{sstest}
\end{figure}

\begin{figure}[h]
\centerline{\includegraphics[width = \linewidth]{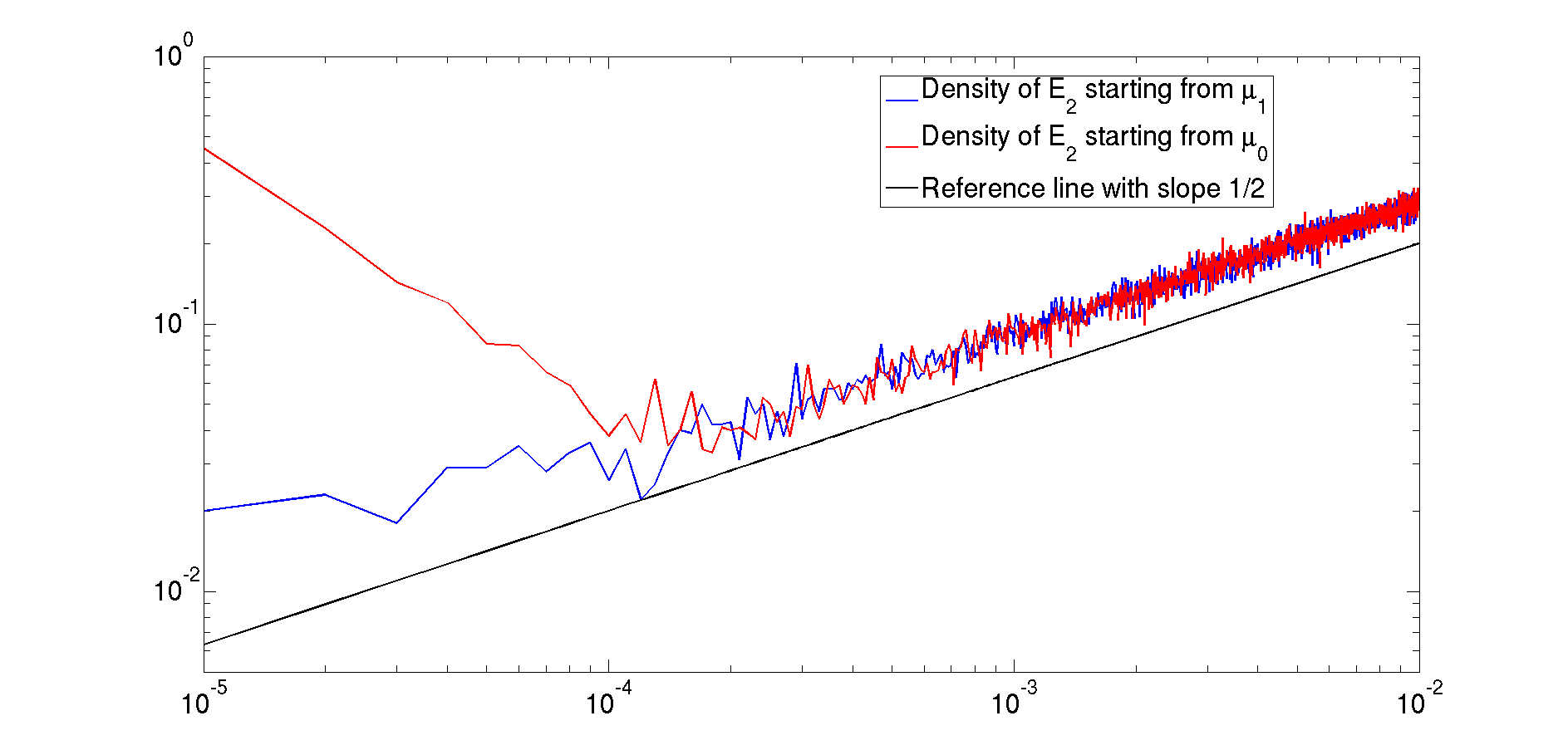}}
\caption{Probability density functions of $E_{2}$ at $T = 200$ when starting from initial
  distributions $\mu_{0}$ and $\mu_{1}$ in a log-log plot. Model parameters are $T_{L} = 1, T_{R} = 2$, $N = 3$, and $M =
  2$. The interval $[0, 0.01]$ are divided into
  $1000$ bins. The probability density is estimated by counting samples
  whose $E_{2}$ falls into each bin at $T = 200$. Sample size of Monte Carlo
  simulation is $10^{9}$.} 
\label{comtail}
\end{figure}

\begin{figure}[h]
\centerline{\includegraphics[width = \linewidth]{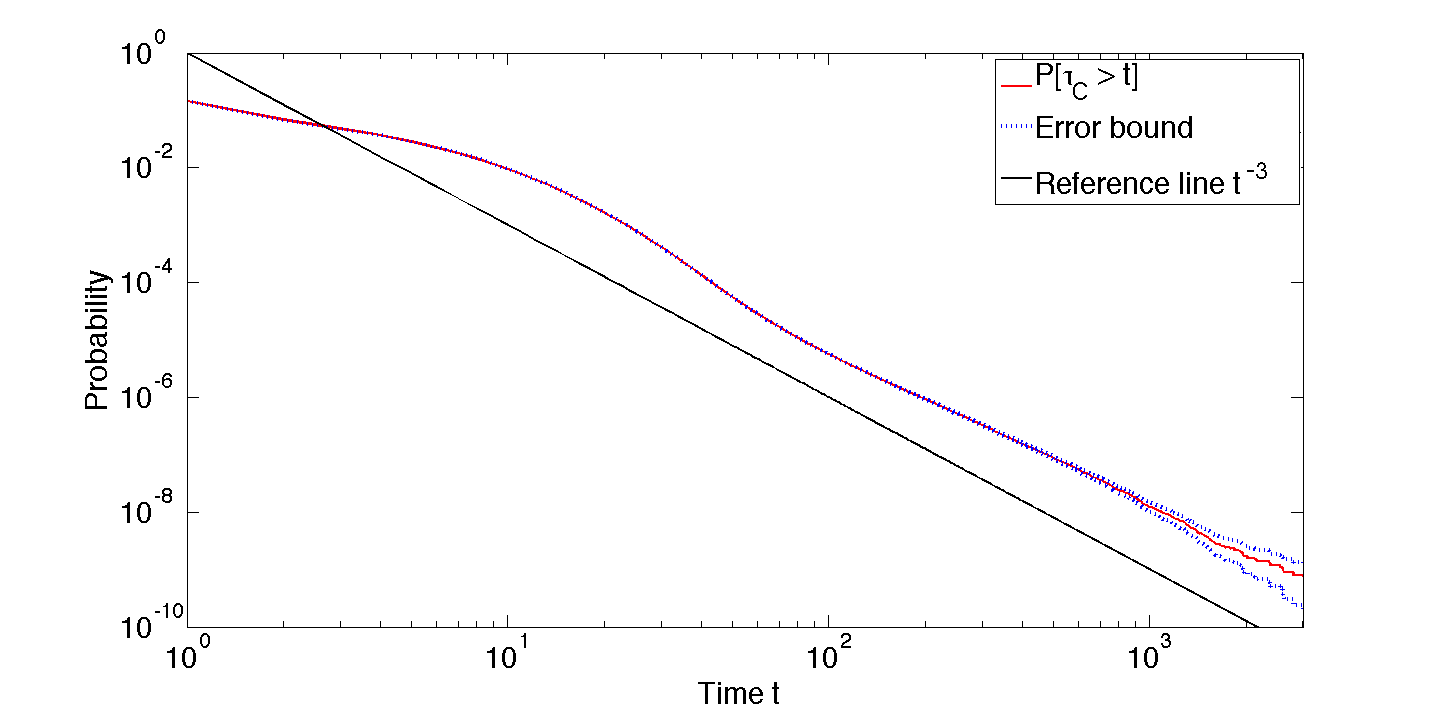}}
\caption{$\mathbb{P}[ \tau_{\mathfrak{C}} > t]$ versus $t$ when
  starting from $\hat{\pi}$ in a log-log plot. Model parameters are $T_{L} = 1, T_{R} =
  2$, $N = 3$, and $M = 2$. Blue dots
  are the error bar with confidence level 0.95. Black line is a
  reference line with slope $-3$. Sample size of Monte
  Carlo simulation is $10^{10}$.} 
\label{SEE_ss}
\end{figure}

It remains to check {\bf (N2)}. We numerically show that 
\begin{equation}
  \label{eq4-15}
 \gamma (\mathbf{E}) = \sup_{t \geq h}\frac{\mathbb{P}_{\mathbf{E}}[\tau_{\mathfrak{C}}
    > t]}{t^{-4}}
\end{equation}
is uniformly bounded on $\mathfrak{C}$. We follow procedure (a)-(d) in
Section 4.1 to show the boundedness of $\gamma (\mathbf{E})$. In fact, 
\begin{equation}
  \label{eq4-16}
  \gamma_{N}(\mathbf{E}) = \sup_{1 \leq n \leq N} \sup_{t \geq h}\frac{\mathbb{P}_{\mathbf{E}}[\tau_{\mathfrak{C}}
    > t]}{t^{-2}} 
\end{equation}
is stabilized very fast with increasing $N$. We find that a sample of size $10^6$ is sufficient for a reliable
estimate of $\gamma (\mathbf{E})$. Figure \ref{SEE_search} shows that
when $E_{i}$ is small, $\gamma (\mathbf{E})$ decreases monotonically
with decreasing $E_{i}$ for each $i = 1 \sim 3$. Therefore, we expect that the maximal of $\gamma(\mathbf{E})$ in $\mathfrak{C}$
is reached at $\mathbf{E}_{*} = (0.1, 0.1, 0.1)$. In fact, intuitively one should expect
$\gamma(\mathbf{E})$ to decrease with site energy, as starting from low site energy
means having higher probability to have even lower site energy after an 
energy exchange. Finally, we run the simulation again to estimate
$\mathbb{P}_{\mathbf{E}^{*}}[\tau_{c} > t]$. As seen in Figure \ref{SEE_max}, when
starting from $\mathbf{E}_{*}$, $\mathbb{P}_{\mathbf{E}^{*}}[\tau_{c}
> t]$ has a tail
$\sim t^{-4}$.

\begin{figure}[h]
\centerline{\includegraphics[width = \linewidth]{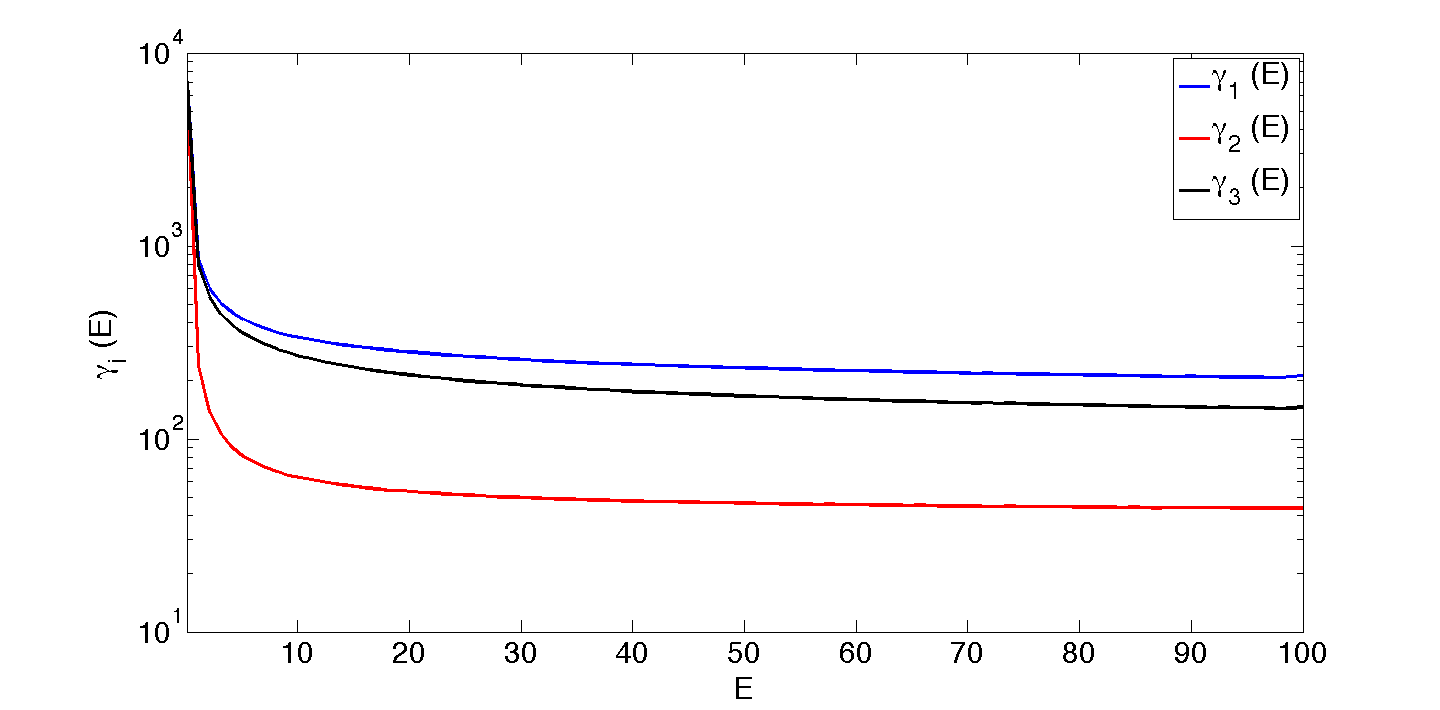}}
\caption{The label $\gamma_{i}(E)$ means replacing the $i$-th entry of
  $\gamma (0.1, 0.1, 0.1)$ by $E$. Model parameters are $T_{L} = 1, T_{R} = 2$, $N = 3$, and $M =
  2$. Three curves plot
  $\gamma_{i}(E)$ on $[0.1, 100]$ for $i = 1 \sim 3$. Linear-log plot
  is used because values of
  $\gamma_{i}(E)$ changes significantly when $E$ is small. } 
\label{SEE_search}
\end{figure}

\begin{figure}[h]
\centerline{\includegraphics[width = \linewidth]{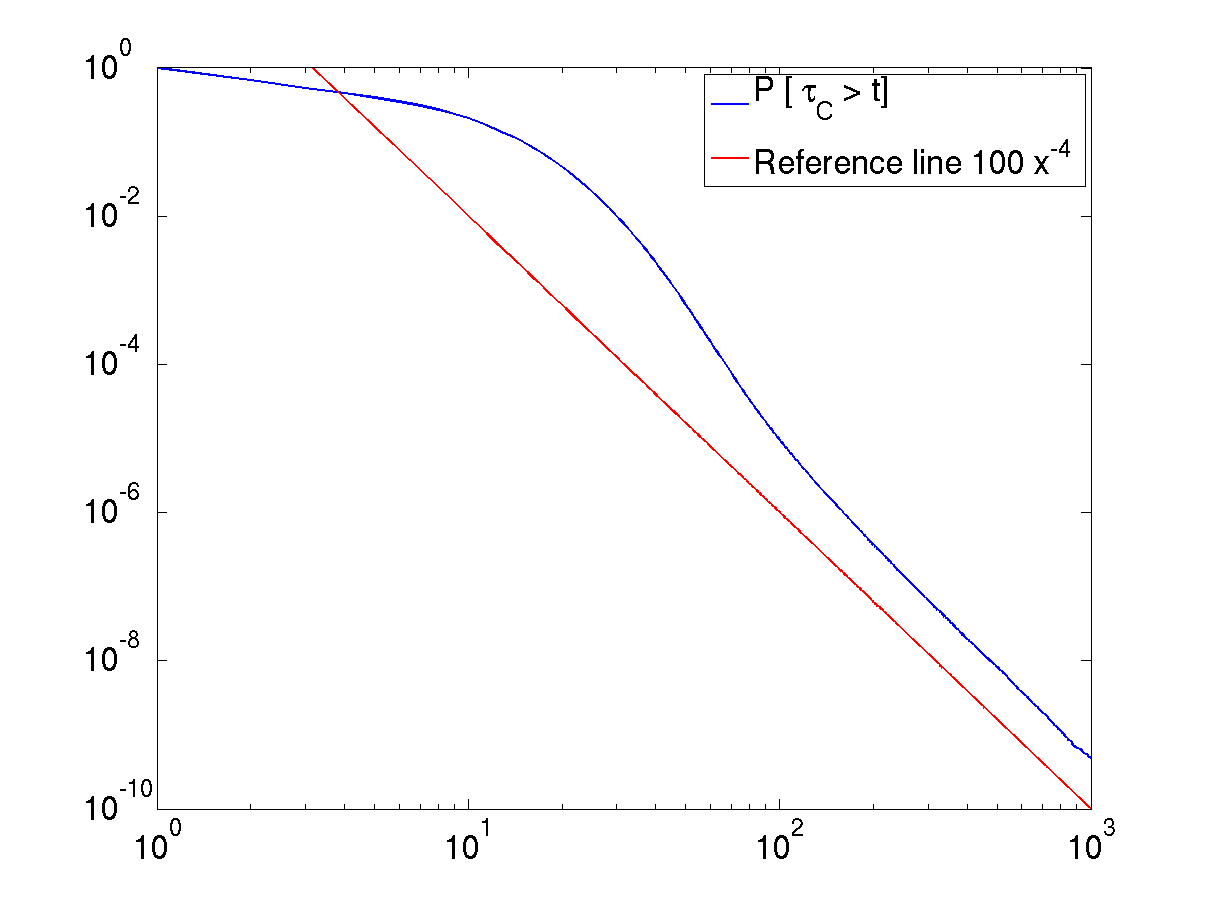}}
\caption{$\mathbb{P}_{E^{*}}[ \tau_{\mathfrak{C}} > t]$ versus $t$ when
  starting from $E^{*}$ in a log-log plot. Red line is a reference
  line with slope $-4$. Model parameters are $T_{L} = 1, T_{R} = 2$, $N = 3$, and $M =
  2$. Sample size of Monte
  Carlo simulation is $10^{10}$.} 
\label{SEE_max}
\end{figure}

\subsection{Main conclusions.} 

The previous subsection verifies two numerical conditions {\bf(N1)} and {\bf(N2)} for
$\mathbf{E}_t$ with parameter $2M$. The slopes of $\mathbb{P}_{\mathbf{E}^{*}}[
 \tau_{\mathfrak{C}} > t]$ and $\mathbb{P}_{\pi}[
 \tau_{\mathfrak{C}} > t]$ in the log-log plot are $2M$ and $2M - 1$ respectively. 

\medskip

We also need the uniqueness of $\pi$.

\begin{pro}
For any $h > 0$, $\mathbf{E}^{h}_{n}$ admits at most one invariant
probability measure.
\end{pro}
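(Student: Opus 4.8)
The plan is to derive uniqueness from two facts already in hand: the $\lambda$-irreducibility of $\mathbf{E}^{h}_{n}$ (Corollary \ref{irreducible}) and the absolute continuity of invariant measures together with a strictly positive density (Proposition \ref{abscont}). The underlying principle is a standard one from the ergodic theory of $\phi$-irreducible Markov chains \cite{meyn2012markov, hairer2010convergence}: distinct \emph{ergodic} invariant probability measures of a Markov operator are mutually singular. So if I can show that every invariant probability measure of $\mathbf{E}^{h}_{n}$ is equivalent to Lebesgue measure $\lambda$, then no two ergodic invariant measures can coexist, and uniqueness follows.

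More concretely, I would argue by contradiction. Suppose $\mathbf{E}^{h}_{n}$ admits two distinct invariant probability measures. The set of invariant probability measures is convex and its extreme points are exactly the ergodic invariant measures; hence two distinct invariant measures force the existence of two distinct ergodic invariant measures $e_{1}\neq e_{2}$. First I would note that Proposition \ref{abscont} applies to each $e_{i}$: being invariant for the time-$h$ kernel $P^{h}$, each $e_{i}$ inherits absolute continuity with respect to $\lambda$ together with a density that is strictly positive on all of $\mathbb{R}^{N}_{+}$, since the argument of Proposition \ref{abscont} rests on the smoothing property of $P^{h}$ and on $\lambda$-irreducibility rather than on the continuous-time flow. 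Consequently $e_{1}\sim\lambda\sim e_{2}$, so $e_{1}$ and $e_{2}$ are mutually equivalent. On the other hand, distinct ergodic invariant measures are mutually singular: choosing a bounded measurable $f$ with $\int f\,\mathrm{d}e_{1}\neq\int f\,\mathrm{d}e_{2}$, the ergodic theorem for the chain separates their supports into disjoint full-measure sets. Mutual equivalence and mutual singularity are incompatible for nonzero measures, which gives the contradiction and hence at most one invariant probability measure.

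I expect the main obstacle to be careful bookkeeping rather than a deep difficulty: one must verify that Proposition \ref{abscont}, stated for the flow $\mathbf{E}_{t}$, indeed delivers both absolute continuity \emph{and} strict positivity of the density for every measure that is invariant only under the discrete kernel $P^{h}$. This is where $\lambda$-irreducibility (Corollary \ref{irreducible}) and the minorization of Theorem \ref{urs1} are essential, as together they guarantee that the chain reaches every Lebesgue-positive set and that the invariant density cannot vanish on any region. Once strict positivity everywhere is secured, the equivalence-versus-singularity dichotomy closes the argument, and the conclusion holds for every $h>0$ since none of the ingredients depend on the particular choice of $h$. Alternatively, the same statement can be read off directly from the general uniqueness theorem for $\psi$-irreducible positive chains in \cite{meyn2012markov}, the existence of an invariant probability measure having already been established in Section 4.1.
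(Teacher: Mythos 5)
Your argument is correct, but it takes a different route from the paper's own proof. The paper argues directly from the construction in Theorem \ref{urs1}: for any starting point $\mathbf{E}_{0}\in\mathbb{R}^{N}_{+}$ one has $P^{h/2}(\mathbf{E}_{0},K)>0$, and from every point of $K$ the kernel $P^{h/2}$ has strictly positive density on $K$; composing, $P^{h}(\mathbf{E}_{0},\cdot)$ has positive density on $K$ for \emph{every} starting point, so all of $\mathbb{R}^{N}_{+}$ lies in a single ergodic component and two invariant probability measures cannot coexist. You instead pass through the ergodic decomposition (two distinct invariant measures force two distinct ergodic ones, which are mutually singular) and then exclude singularity by showing every invariant measure is equivalent to Lebesgue measure. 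Both are legitimate irreducibility-implies-uniqueness arguments; the paper's is shorter and self-contained, while yours invokes heavier machinery (the ergodic decomposition on a Polish space, mutual singularity of ergodic measures, Proposition \ref{abscont}), and your closing remark is also right that the general uniqueness theorem for $\psi$-irreducible positive chains in \cite{meyn2012markov} yields the statement directly from Corollary \ref{irreducible}.

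One step you should tighten: Proposition \ref{abscont} is stated for invariant measures of the continuous-time process $\mathbf{E}_{t}$, and a measure invariant for the time-$h$ kernel need not a priori be invariant for the flow. You flag this, but the cleanest repair avoids \ref{abscont} entirely: if $\pi=\pi P^{h}$, then Theorem \ref{urs1} gives $\pi(\cdot)\geq\pi(K)\,\eta\,U_{K}(\cdot)$, and since boxes of the form $K$ exhaust $\mathbb{R}^{N}_{+}$ one can pick $K$ with $\pi(K)>0$; hence any two invariant probability measures dominate a common positive multiple of $U_{K}$ and cannot be mutually singular. With that substitution your contradiction closes without extending Proposition \ref{abscont} to the sampled chain.
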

\begin{proof}
By the proof of Theorem \ref{urs1}, for any $\mathbf{E} \in K$,
$P^{h/2}( \mathbf{E}, \cdot)$ has strictly positive density on $K$. In
addition, $P^{h/2}(\mathbf{E}_{0}, K) > 0$ for any $\mathbf{E}_{0} \in
\mathbb{R}^{N}_{+}$. Hence $P^{h}(\mathbf{E}_{0}, \cdot)$ has positive
density on $K$. This implies that every $\mathbf{E}_{0} \in
\mathbb{R}^{N}_{+}$ belongs to the same ergodic component. Therefore,
$\mathbf{E}^{h}_{n} $ cannot have more than one invariant probability measure.
\end{proof}

\bigskip 

In summary, we have the following conclusions for
$\mathbf{E}_{t}$. Since now $P_{x}[ \tau_{\mathfrak{C}} > t]$ and
$P_{\pi}[ \tau_{\mathfrak{C}} > t]$ have different tails, we can apply
conclusions (a) - (e) with $\beta = 2M$ when $\pi$ is not
involved, and $\beta = 2M-1$ if the initial distribution is $\pi$.

\begin{enumerate}
  \item For any $T_{L}$, T$_{R}$, there exists a unique invariant probability measure $\pi$,
    i.e., the nonequilibrium steady-state, which is absolutely
    continuous with respect to the Lebesgue measure on
    $\mathbb{R}^{N}_{+}$. 
\item For almost every $\mathbf{E}_{0} \in \mathbb{R}^{N}_{+}$ and any
  sufficiently small $\epsilon > 0$, we have
\begin{equation}
  \label{eq4-17}
\lim_{t\rightarrow \infty}t^{2M - 1 - \epsilon} \|
  \delta_{\mathbf{E}_{0}} P^{t} - \pi \|_{TV} = 0 \,.
\end{equation}
\item For any functions $\eta$, $\xi \in L^{\infty}(
  \mathbf{R}^{N}_{+})$, we have correlation decay rate
\begin{equation}
  \label{eq4-18}
C_{\mu}^{\eta, \xi}(t) \leq O(1) \cdot t^{\epsilon - 2M}
\end{equation}
for any $\epsilon > 0$ and $\mu$ satisfies {\bf (N1)}.
\item  For almost every points $\mathbf{E}_{0} , \mathbf{E}_{1} \in \mathbb{R}^{N}_{+}$ and any
  sufficiently small $\epsilon > 0$, we have
\begin{equation}
  \label{eq4-19}
\lim_{t\rightarrow \infty}t^{2M  - \epsilon} \|
  \delta_{\mathbf{E}_{0}} P^{t} - \delta_{\mathbf{E}_{1}} P^{t} \|_{TV} = 0 \,.
\end{equation}
\end{enumerate}

\subsection{Ergodicity of the billiard model}
The ergodicity of the billiard model is extremely difficult either
to prove or to compute. Let $\Phi_{t}$ be the flow of the billiard model,
$\mu$ be the initial measure, $\eta$ and $\xi$ be two
observables. Theoretically the decay of correlation 
\begin{equation}
  \label{eq4-20}
C^{\mu}_{\xi, \eta}(t) = |\int_{X} \xi(\Phi_{t}(x)) \eta(x) \mu(\mathrm{d}x)
  - \int_{X} \xi(\Phi_{t}(x)) \mu( \mathrm{d}x) \int_{X}\eta(x)
  \mu(\mathrm{d}x) |
\end{equation}
is computable. The speed of decay of correlation gives the ergodicity
of the billiard model. However, for large $t$, $C^{\mu}_{\xi,
  \eta}(t)$ has very small expectation and $O(1)$ variance. In order
to control the relative error, the sample size of Monte Carlo
simulation needs to be very large. In particular, the polynomial tail
usually can only be captured for large $t$. Simple calculation shows
that the required sample size can easily exceed the ability
of today's computer. See our discussion in \cite{2015ChaosLi} for the detail.

Instead, we choose to present the other evidence to support the polynomial
speed of correlation decay for the billiard model. The assumption is that when the total
kinetic energy in both cells are sufficiently high, the decay of
correlation is exponentially fast. Therefore, if the first passage
time distribution
to such a high energy state has a polynomial tail $\sim t^{-\beta}$, we
expect the decay rate of correlation to be also $\sim
t^{-\beta}$. Although a rigorous justification for this assumption is
not possible, this approach can be rigorously proved for simpler
deterministic dynamical systems and Markov chains. This is called the
``induced chain method'', in which we study the induced Markov chain
generated by a set such that the induced chain has exponentially fast mixing. We refer readers to \cite{li2016polynomial} for the
induced chain method for Markov processes and \cite{young1998statistical,
young1999recurrence} for the Young
towers for deterministic dynamical systems. 

In Figure \ref{tail4}, we show the tail distribution of the first passage time to
the high energy state 
\begin{equation}
  \label{eq4-21}
A = \{ ( \mathbf{x}_{1}^{1}, \mathbf{v}^{1}_{1}, \mathbf{x}_{2}^{1},
  \mathbf{v}_{2}^{1}),  ( \mathbf{x}_{1}^{2}, \mathbf{v}^{2}_{1}, \mathbf{x}_{2}^{2},
  \mathbf{v}_{2}^{2}) \,|\, | \mathbf{v}^{i}_{1}|^{2} + |
  \mathbf{v}^{i}_{2}|^{2} \geq 0.2, i = 1, 2 \}\subset \mathbf{\Omega}
\end{equation}
for a 2-cell 4-particle model as seen in Figure \ref{fig:TwoCells}. The total kinetic energy in the system
is set to be $1$. Since the rate $\sqrt{ \min\{E_{i}, E_{i+1}\} }$
only occurs when one of the total cell energy is sufficiently small
(less than $0.01$ in our case), we need some importance sampling to
reduce the computational cost. The initial cell total cell energy is
sampled from the distribution of post-collision total cell energy,
conditioning with the event that the left cell energy is less than $0.001$. We can
see that the tail of first passage time to the high energy set is
$\sim t^{-4}$. This supports our claim that the decay rate of
correlation should be $t^{-2M}$ if the number of particles in each
cell is $M$. 

\begin{figure}[htbp]
\label{tail4}
\centerline{\includegraphics[width = \linewidth]{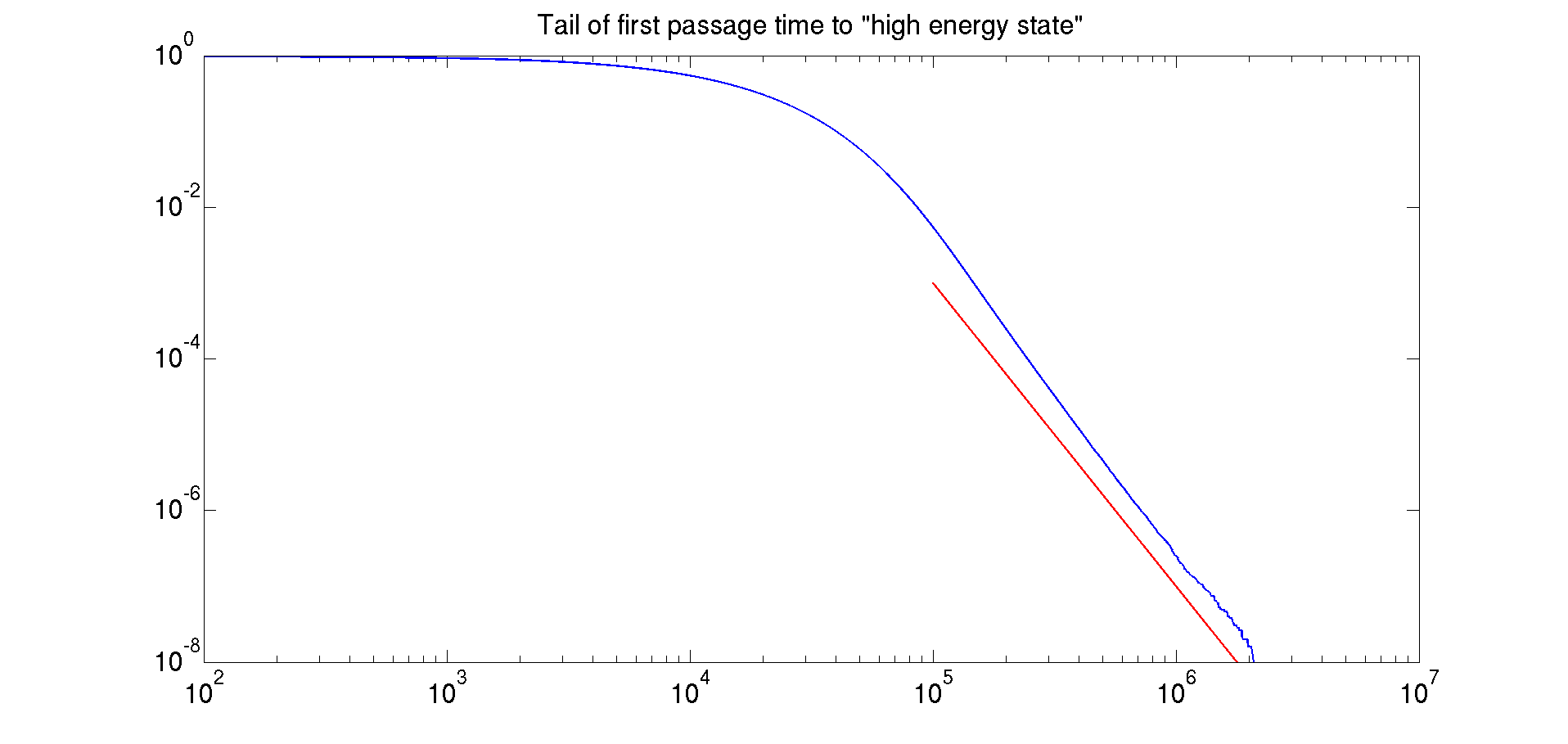}}
\caption{ Blue: Tail of first passage time $\mathbb{P}[ \tau_{A} >
  t]$ for the billiard model. Red: reference line with slope
  $-4$. Sample size of the simulation is $10^{9}$. The initial
  distribution is a conditional Liouville measure conditioning on the
  left cell energy is less than $0.001$. }
\end{figure}

\section{Comparison of thermal conductivity}
It remains to compare the thermal conductivity of the billiard model
and that of the stochastic energy exchange model. It has been reported in
\cite{gaspard2008heat2} that the billiard model has a ``normal'' thermal
conductivity, i.e., the thermal conductivity is proportional to the
reciprocal of the length of the chain. We use Monte Carlo simulations to verify that the
stochastic energy exchange model also has the ``normal'' thermal
conductivity.

We define the empirical thermal conductivity in the following
way. Consider a stochastic energy exchange model with $N$ sites and
boundary temperatures $T_{L}$ and $T_{R}$ respectively. We take the convention that
sites $0$ and $N+1$ are the left and the right heat baths
respectively. Let $J(t_{i}, k)$ be the energy flux from right to left if one
energy exchange occurs between site $k$ and site $k+1$ at time
$t_{i}$. More precisely, we have
\begin{equation}
  \label{eq5-1}
  J(t_{i}, k) = \left \{
\begin{array}{ccc}
 E_{k}' - E_{k} & \mbox{if} & k \neq 0\\
E_{k+1} - E'_{k+1} & \mbox{if} & k = 0 \,,
\end{array}
\right . 
\end{equation}
if $E_{k}$ and $E_{k+1}$ exchanges energy at $t_{i}$, where $E_{k}'$
and $E_{k+1}'$ denote the post-exchange energy. When starting from the
invariant probability measure $\pi$, the {\it thermal conductivity} is
defined as
\begin{equation}
  \label{eq5-2}
  \kappa = \lim_{T \rightarrow \infty}\frac{1}{T}
  \frac{1}{T_{R} - T_{L}} \frac{1}{N + 1} \sum_{t_{i} < T} J(t_{i}, k) \,.
\end{equation}
In other words, the thermal conductivity measures the average energy
flux between each two sites in the chain. 

In the numerical simulation, we fix boundary temperatures as $T_{L} =
1$ and $T_{R} = 2$. The thermal conductivity is then computed for
increasing $N$. Figure \ref{conductivity} shows the plot of $\kappa$ vs
$1/N$. The least square curve fitting of the plot in Figure \ref{conductivity}
gives a linear relation
$$
  \kappa(N) = \frac{0.0752}{N} + 4.02 \times 10^{-5} \,.
$$
We believe this numerical result confirms that $\kappa$ is
proportional to $1/N$. 

\begin{figure}[htbp]
\centerline{\includegraphics[width = \linewidth]{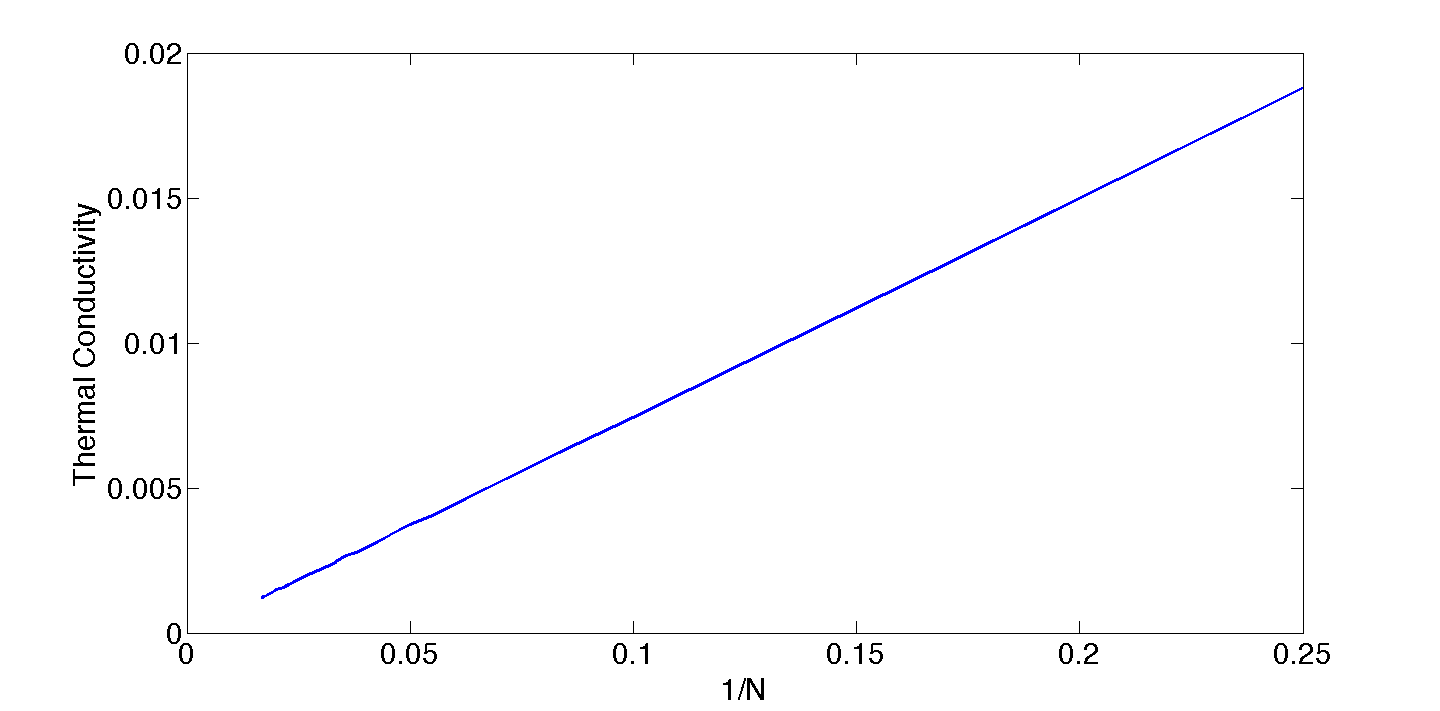}}
\caption{Thermal conductivity for the stochastic energy exchange
  model with $M = 2$. Boundary temperatures are $T_{L} = 1$ and $T_{R} = 2$. The
  length of the chain increases from $4$ to $60$. The thermal conductivity
  is computed by averaging $80$ results of \eqref{eq5-2} for $T =
60000$.}
\label{conductivity}
\end{figure}

\section{Conclusion}
In this paper we study a nonequilibrium billiard model that
mimics the dynamics of gas particles in a long and thin tube. Due to
the significant difficulty of working on the deterministic interacting
particle system
directly, we carry out a series of numerical simulations to study the
stochastic rule of energy exchanges between cells, which is
essentially given by collision events that involves particles
from neighboring cells. The time distribution of such
events and the post-collision energy distribution are studied. Numerical
results show the time evolution of the energy profile of the
nonequilibrium billiard model is approximated by a much simpler stochastic
energy exchange model. We remark that approximating a difficult
chaotic billiard system by a more mathematically tractable stochastic
process is a very generic strategy, which can be potentially
applied to other highly chaotic billiard-like systems in physics. For example, it
is known that Fermi acceleration can be found in many chaotic
billiards \cite{lenz2008tunable, lenz2010phase, leonel2010suppressing,
loskutov1999mechanism}. And the rate of energy growth is found to be
significantly larger
in many chaotic billiards or stochastic acceleration models
\cite{karlis2006hyperacceleration, karlis2007fermi}. 

We then compare the stochastic energy exchange model
and the original billiard system. A series of analytical and numerical
studies are carried out to study the ergodicity of these models. The conclusion is that the key
dynamical properties of the nonequilibrium billiard model is preserved by the stochastic
energy exchange model. Both systems have polynomial ergodicity with a
speed of correlation decay $O(t^{-2M})$, where $M$ is the
number of particles in a cell. In addition, the thermal conductivity of both models
is proportional to $1/N$. Simulation algorithms used in this paper
are the stochastic simulation algorithm (SSA)
\cite{gillespie1977exact, li2015fast} and the event-driven billiard
simulation algorithm \cite{lubachevsky1991simulate} for the stochastic model and the billiard
model, respectively. 

This result opens the door of many further investigations, as the stochastic
energy exchange model is tractable for many rigorous
studies. For example, the polynomial ergodicity can be rigorously proved by using
the same technique developed in \cite{li2016polynomial}. In addition
to the ergodicity, the mesoscopic limit problem is also worth to study. When the number of
particles in a cell is large, each collision will only
exchange a small amount of energy. Hence the stochastic energy
exchange model (after a time rescaling) has interesting slow-fast
dynamics. Such slow-fast dynamics can be approximated by a stochastic differential equation (the mesoscopic
limit equation). Many macroscopic thermodynamic properties can be
further derived from the mesoscopic limit equation. 

This paper serves as the first paper of a sequel that aims to connect
billards-like deterministic dynamics and macroscopic thermodynamic
laws. In our forthcoming papers, we will rigorously address the ergodicity, 
mesoscopic limit, and macroscopic thermodynamic properties of the stochastic energy
exchange model derived in this paper.

\end{document}